\def\tight{\operatorname{tight}}
\def\chara{\operatorname{char}}
\def\supp{\operatorname{supp}}
\def\ker{\operatorname{ker}}
\def\Iso{\operatorname{Iso}}
\def\Tr{\operatorname{Tr}}
\def\C{\mathbb{C}}
\def\F{\mathbb{F}}
\def\K{\mathbb{K}}
\def\N{\mathbb{N}}
\def\Z{\mathbb{Z}}
\def\AA{\mathcal{A}}
\def\FF{\mathcal{F}}
\def\G{\mathcal{G}}
\def\JJ{\mathcal{J}}
\def\NN{\mathcal{N}}
\def\Gf{\mathfrak{G}}
\def\Hf{\mathfrak{H}}
\newcommand{\bfz}{\mathbf{0}}
\newcommand{\bfo}{\mathbf{1}}
\newtheorem{thm}{Theorem}[section]
\newtheorem{lemma}[thm]{Lemma}
\theoremstyle{definition}
\theoremstyle{remark}
\newtheorem{remark}[thm]{Remark}
\newtheorem{example}[thm]{Example}
\numberwithin{equation}{section}
\tikzstyle{vertex}=[circle]
\tikzstyle{goto}=[->,shorten >=1pt,>=stealth,semithick]
\begin{document}

\date{June 28, 2024}
\title{Simplicity of  $*$-algebras of non-Hausdorff $\Z_2$-multispinal groupoids}
\author{C. Farsi}
\email{carla.farsi@colorado.edu}
\author{N.~S. Larsen}
\email{nadiasl@math.uio.no}
\author{J. Packer}
\email{packer@colorado.edu}
\author{N. Thiem}
\email{nathaniel.thiem@colorado.edu}
\maketitle


\begin{abstract}
	We study simplicity of $C^*$-algebras arising from self-similar groups of $\Z_2$-multispinal type, a generalization of the Grigorchuk case whose simplicity was first proved by L. Clark, R. Exel, E. Pardo, C. Starling, and A. Sims in 2019, and we prove results generalizing theirs. Our first main result is a sufficient condition for simplicity of the Steinberg algebra  satisfying conditions modeled on the behavior of the groupoid associated to the first Grigorchuk group. This closely resembles conditions found by B. Steinberg and N. Szak\'acs. As a key ingredient we identify an infinite family of $2-(2q-1,q-1,q/2-1)$-designs, where $q$ is a positive even integer.  We then deduce the simplicity of the associated $C^*$-algebra, which is our second main result.  Results of similar type were considered by B. Steinberg and N. Szak\'acs in 2021, and later by K. Yoshida, but their methods did not follow the original methods of the five authors.
	\end{abstract}
\section{Introduction}

Groupoids and their $C^*$-algebras have attracted a lot of attention in recent years, and  the particular case of \'etale groupoids has especially been a topic of focus.
\'Etale groupoids have been associated to foliations and other geometric settings.
While the Hausdorff groupoid case has been the object of many of the first papers, there has also been a lot of recent interest in non-Hausdorff groupoids with Hausdorff unit space, see e.g.
\cite{Exel-Pitts}.

Here we consider in detail the groupoids associated to a class of multispinal groups.
Multispinal groups were introduced by Steinberg and Szak\'acs, who studied simplicity of their Nekrashevych algebra over fields of
varying characteristic and showed that (non)-simplicity varies with the value of the characteristic,  \cite{Stei-Sza-Contr}. Later, Yoshida provided alternative characterisations of the simplicity of the associated groupoid $C^*$-algebras using methods based on ideas in \cite{Stei-Sza-Contr} and a characterization of KMS states, \cite{Y}.

 Multispinal groups provide valuable models of self-similar actions, and we here concentrate our attention on the class of \v{S}uni\'{c} groups constructed from primitive polynomials of arbitrary degree $n\geq 2$ over the field $\F_2$ with two elements; we call them $\Z_2$-multispinal groups, see \cite{Su} for the general construction. Of particular note is the polynomial $f_2(x)=x^2+x+1$, leading to the Grigorchuk group.

We proceed with a quick review of self-similar actions and their associated groupoids. Self-similar groups play an important role in geometric group theory. One of the main examples of self-similar groups is the Grigorchuk group constructed from an action based on the rooted tree on the alphabet $X=\{0,1\}$. The Grigorchuk group  is the first known example of a finitely generated group of
intermediate growth \cite{Gri1}; see Nekrashevych \cite{Nekra-Crelle} \cite{Nekra-Sao-Paulo},\cite{Nekra-growth},\cite{Nekra} among others. Roughly speaking, self-similar actions of groups are defined in the following way.   Let $X$ be a finite set with more than one element, called the alphabet, and let $X^*$ denote the set of finite words over $X$. One may view $X^*$ as a rooted tree, with the empty word as root, and an edge between $\eta$ and $\eta x$ for every word $\eta$ in $X^*$ and letter $x$.
If we have a faithful length-preserving action of $G$ on $X^*$, written
$(g,\eta)\;\to\; g\cdot \eta,$ such that for all $g \in G, x \in X,$ there exists a unique element of
$G,$ denoted $g|_{x}$ such that for all $\eta \in X^*,$
$$g\cdot (x\eta)\;=\;(g \cdot x)(g|_{x}
\cdot \eta),$$
the pair $(G,X)$ is called a {\it self-similar action}.   The map $G \times X\;\to\; G,\;(g, x) \;\to g|_{x},$
is called the {\it restriction}. This property  gives the action an additional structure that parallels the self-similarity of $X^*$ given by the shift map $x \eta \to \eta$ which projects first-level subtrees to the whole tree.

Nekrashevych defined a  $C^*$-algebra associated to self-similar groups, see \cite{Nekra-Crelle}.  One can also construct a groupoid naturally associated to self-similar actions  with Hausdorff unit space but which is not necessarily Hausdorff. This provides another starting point for constructions of $C^*$-algebras associated to self-similar actions. Steinberg defined an algebraic *-algebra associated to an ample (and \'etale) groupoid $\mathcal{G}$ with coefficients in a field $\K$, henceforth referred to as its \emph{Steinberg algebra},  \cite{Steinberg-A}. In the case of groupoids from self-similar actions, the Steinberg algebra is an algebraic version of the Nekrashevych  $C^*$-algebra, see e.g. \cite{CEPSS}, \cite{Stei-Sza-Contr} and the references therein. We refer to Section \ref{sub:prelim} for details on both of these algebras. Moreover, for ample non-necessarily Hausdorff groupoids,  in the paper \cite{CZ} they give an alternative  way to construct from the Steinberg algebra the groupoid $C^*$-algebra.

The starting point motivating this research was the study of simplicity of the Steinberg algebra and the $C^*$-algebra of the groupoid associated with the self similar action of the first Grigorchuk group on $\{0,1\}$ from \cite{CEPSS}. The main abstract result for a not necessarily Hausdorff $\G$  with Hausdorff unit space $\G^{(0)}$ is \cite[Theorem~ 4.10]{CEPSS}. In particular, for a second-countable, locally compact, \'etale groupoid such that
$\G^{(0)}$ is Hausdorff, if $C^*(\G)$ is simple, then $C^*(\G) = C^*_r(\G)$, $\G$ is effective and for every nonzero $a\in C^*_r(\G)$, $\supp(j(a))$ has non-empty interior. In \cite{CEPSS}, an element $a\in C^*_r(\G)$ is \textit{singular} if $\supp(j(a))$ has empty interior.

Trying to understand the work of \cite{CEPSS} quickly lead us to wonder whether other self-similar groups apart from the Grigorchuk group displayed the key features of singular functions  expressed as combinations of characteristic functions in a family depending on the nucleus of the self-similar group, cf. \cite[Section 5]{CEPSS}. Of course, similar questions were asked by other authors. Steinberg and Szak\'acs provided a sweeping generalization of the simplicity result in the case of Nekrashevych algebras of self-similar actions, and that in particular covered the case of multispinal groups, see \cite[Section 7]{Stei-Sza-Contr} and especially their Theorems 7.6 and 7.10. Their methods of proof build on an algebraic approach in two main parts, one identifying a simplicity graph, more precisely  a Schreier graph, associated to the self-similar action, with the second  step being a representation theoretic characterisation of  conditions for (non)-simplicity.

In this work, we study simplicity of $C^*$-algebras of non-Hausdorff groupoids, here denoted $\mathcal{G}_n$, arising from self-similar groups of $\Z_2$-multispinal type, here denoted $\Gf_n$ where $n\geq 2$, a generalization of the Grigorchuk case, and prove results generalizing those of \cite{CEPSS}, see Sections~\ref{section:simplicity} and \ref{section:simplicity C*-algebra}.
The proofs of our main simplicity results, for the Steinberg algebra $\AA_\C(\mathcal{G}_n)$ and the groupoid $C^*$-algebra $C^*(\mathcal{G}_n)$  follow closely the thread laid out in \cite{CEPSS} in a series of lemmas. This approach relies more on topology and linear algebra than group theory.

The key contribution of the present work is that it provides a bridge, via considerations of  matrix ranks related to $2$-designs in \cite{GLL}, \cite{Wil} and  \cite{Assmus}, between the methods of proofs for \cite[Theorem 5.22(1)]{CEPSS} and for \cite[Theorem 7.10]{Stei-Sza-Contr} in  case of the prime $p=2$. Perhaps most worthy of attention is the distinction in simplicity depending on the characteristic of the underlying field. As we read from \cite{GLL} and \cite{Wil}, this distinction has appeared elsewhere in a different context. A natural question is whether similar considerations of matrix ranks  are true for other prime values, but here we do not have an answer, as we have not identified analogous $p$-designs for odd values of the prime $p$.

Another approach to simplicity of groupoid Steinberg and $C^*$-algebras associated to multispinal groups is due to Yoshida, see \cite{Y}. The necessary and sufficient condition for simplicity of the universal $C^*$-algebra associated to an amenable self-similar action of a multispinal group in \cite[Theorem 3.13]{Y} is given in terms of invertibility of a certain matrix associated to the group.  Since the first Grigorchuk group is such an important construction, with ramifications in different fields, we think it is worthwhile to have access to different perspectives on the (non)-simplicity of Steinberg and groupoid $C^*$-algebras of this and similar groups.

The paper is structured as follows: In Section \ref{sec:back-Gpds}  we first review  $C^*$-algebras from self-similar actions  and how they can be viewed as groupoid $C^*$-algebras, where the groupoids are associated to inverse semigroups arising from self-similar actions. In Section~\ref{section:applications} we start by describing the construction of the $\Z_2$-multispinal groups $\Gf_n$ of interest here. We identify an explicit family of full rank matrices associated to a
\[
2-\bigl(2^n-1,2^{n-1}-1, 2^{n-2}-1 \bigr)-\text{design},
\]
which we then show to be a special case of a  $2-(2q-1,q-1,q/2-1)$-design, where $q$ is a positive even integer. We prove simplicity of the Steinberg algebra of $\mathcal{G}_n$ over a field of characteristic zero in Section~\ref{section:simplicity}, and in Section~\ref{section:simplicity C*-algebra} we present our results on the simplicity of the full $C^*$-algebra associated to $\mathcal{G}_n$. The main bulk of the two last sections consists of versions of the lemmas obtained in \cite[Section 5]{CEPSS} by working with the specific generators of the first Grigorchuk group (which we recall is the multispinal group $\Gf_2$)  that are the elements of the nucleus fixing the letters of the alphabet $X$. So our effort goes into lifting these results to an arbitrary  $\Z_2$-multispinal group  $\Gf_n$ with $n\geq 3$. We hope that some of the methods developed here may extend to study simplicity, and even nosimplicity, in the case of other self-similar groups.
Moreover, according to very recent research just posted on the arxiv (see \cite{BKMc}), our methods also imply simplicity  of the associated groupoid $L^p$-operator algebras (similarly to the case of the Grigorchuk group, see \cite[Example]{BKMc}).

\emph{Acknowledgments: }{This research was partly supported by:  Simon Foundation Collaboration Grant \#523991 (C.F), Simon Foundation Collaboration Grant \#1561094 (J.P.) and  the Trond Mohn Research Foundation through the project "Pure Mathematics in Norway" (N.L.). We thank Zahra Afsar and Lisa O. Clark for useful discussions at the very early stages of this project, when the first three named authors participated in the workshop "Women in Operator Algebras" at BIRS (Banff International Research Centre), 4-9 November 2018, meeting \# 18w5168. We acknowledge fruitful discussions with Nora Szak\'acs at the late stages of this research.}

\section{Background on ample groupoids and their *-algebras}
\label{sec:back-Gpds}

In this section we recall some basic facts about
self-similar actions and  their associated groupoids. We also recall their connection with groupoids associated to inverse semigroups.
Then   we introduce the self-similar actions groupoid  and their Steinberg and C*-algebras. The ideal of singular functions plays an important role in simplicity settings.

At the end of this section we list a criterion for a set to have non-empty interior in the non-Hausdorff setting.

\subsection{Preliminaries on groupoids}
\label{sub:prelim}
Suppose that $\G$ is a topological groupoid with unit space $\G^{(0)}$ and source and range maps $s,r$. An \emph{open bisection $\Theta$} is an open subset $B\subset \G$ such that the source map (equivalently, the range map) restricts to a homeomorphism of $B$ onto an open subset of $\G$. In this paper, all  groupoids are assumed to be second countable and have a Hausdorff unit space $\G^{(0)}$.

The groupoid $\G$ is \emph{{\'e}tale} if its topology admits a basis of open bisections and is \textit{minimal} if for any $u \in \G^{(0)}$ the orbit $[u] := s(r^{-1}(u))$ of $u$  is dense in $\G^{(0)}$. For each $u\in \G^{(0)}$, consider the isotropy group and form  the isotropy bundle
\[\G_u^u:=\{\gamma\in \G:r(\gamma)=s(\gamma)\text{ and } \Iso(\G):=\bigcup_{u\in \G^{(0)}}\G_u^u.\]
The groupoid  $\G$ is \textit{effective } if the  interior of $\Iso(\G)$
is equal to $\G^{(0)}$. We have that $\G^{(0)}$ embeds as an open subset of $\Iso(\G)$.
The groupoid $\G$ is \textit{topologically principal} if the collection of units with trivial isotropy group is
dense in $\G^{(0)}$. If the groupoid is \'etale and second countable, then being effective implies being
 topologically principal. It is known that the converse only holds for Hausdorff groupoids.

\subsection{Tight groupoid associated to an inverse semigroup}
\label{sec:tight-groupoid}

We say a semigroup $S$  is \textit{regular}  if for all $s \in S$ there exists an element $t \in S$ that satisfies $tst = t$ and $sts = s$. The element $t$ is called an \textit{inverse} of $s$.  A regular semigroup is called an \textit{inverse semigroup} if each
element $s$ has a unique inverse  $s^*$. It is well-known that  a regular semigroup is an
inverse semigroup if and only if elements of $E(S):=\{e \in S: e^2 = e\}$ commute.  Note that in this case
$E(S)$ is closed under multiplication. Also $E(S)$ is a semi-lattice with order given by
\[e\leq f \iff ef=e.\]
Given an inverse semigroup $S$,  let $s\in S$ and $ e\in E(S)$ such that
$e\leq s^*s$. Then   $e$ is  \textit{weakly fixed} by $s$  if $sfs^*f \neq 0$ for every nonzero idempotent $f \leq e$.

Let $S$ be an inverse semigroup. A subset $\xi \subset E(S)$ is \textit{downwards directed} if whenever $e, f \in \xi$ then we have  $ef \in \xi$. A subset $\xi \subset E(S)$
is \textit{upwards closed}  if $e\in \xi, f\in E(S)$
and $e\leq f$ implies that $f \in \xi$.
A \textit{filter} in $E(S)$ is a proper subset $\xi \subset E(S)$ that is both  downwards directed and upwards closed. A filter is called an \textit{ ultrafilter} if it is not properly
contained in another filter.

We write $\hat{E}_0(S)$ for the  set of filters in $E(S)$. Then the subspace $\hat{E}_{0}(S)\subseteq\{0, 1\}^{E(S)}$  has the product topology.
For finite sets $X, Y \subset E(S)$, let
\[U(X, Y ) := \{\xi\in E^0(S) : x \in\xi \text{ for all  }x \in X, y \notin \xi \text{ for all  }y \in Y\}.\]
These sets are clopen and generate the subspace topology on $\hat{E}_0(S)$ as $X$ and $Y$
range over all finite subsets of $E(S)$. We also write $ \hat{E}_\infty(S)$ for the  subspace of ultrafilters. We
will denote by $\hat{E}_{\tight}(S)$ the closure of $\hat{E}_\infty(S)$ in $\hat{E}_0(S)$ and call this the space of \textit{tight
	filters}.

Given an inverse semigroup $S$, there is a natural action of $S$ on $\hat{E}_{\tight}(S)$:  Let $ D_e:= \{\xi\in \hat{E}_{\tight}(S): e\in\xi\}$, and
define $\theta_s: D_{s^*s} \to D_{ss^*}$ by
\begin{equation}
\label{eq:theta-bisections}\theta_s(\xi) = \overline{\{ses^* : e\in \xi\}}.
\end{equation}
We write $\G_{\tight}:=\G(S,\hat{E}_{\tight},\theta)$ for the groupoid of germs of this action and call it the \textit{tight groupoid} of $S$.

Let  $(\{D_e\}_e\in E(S), \{\alpha_ s\}_s\in S) $ be an action of  an
inverse semigroup $S$ on a locally compact Hausdorff space $X$, and let $s \in S$. The set
\[F_s := \{x\in  X : \alpha_s(x) = x\}
\]
is called the set of \textit{fixed points for $ s$}. Also let
\[T F_s :=\{x \in X : \text{ there exists }e\in E(S)  \text{ such that } 0\neq e\leq s \text { and }x \in D_e\}=\bigcup_{e\leq s}D_e.\]
and call this the set of \textit{trivially fixed points for $s$}.

\subsection{Groupoids arising from self-similar actions} Let $X$ be a finite set with more than one
element, let $G$ be a group, and let $X^*$ denote the set of all words in elements of
$X,$ including an empty word $\emptyset.$ We denote by $X^{\omega}$ the Cantor set of one-sided infinite
words in $X,$ given the product topology.
Recall that the cylinder sets $C(\eta) = \{\eta x : x\in X^{\omega}\}$ form a clopen basis for the topology on
$X^{\omega}$ as $\eta$ ranges over $X^*$.

If we have a faithful length-preserving action of $G$ on $X^*$, written
$(g,\eta)\;\to\; g\cdot \eta,$ such that for all $g \in G, x \in X,$ there exists a unique element of
$G,$ denoted $g|_{x}$ such that for all $\eta \in X^*,$
$$g\cdot (x\eta)\;=\;(g \cdot x)(g|_{x}
\cdot \eta),$$
the pair $(G,X)$ is called a {\it self-similar action}.   The map $G \times X\;\to\; G,\;(g, x) \;\to g|_{x},$
is called the {\it restriction}, and extends to $G \times X^*$ via the formula
$$g|_{x_1\cdot x_ n}\;=\; g|_{x_1}\;|_{x_2}\cdots \;|_{x_n}.$$
For all $\eta,\;\mu\;\in X^*,$ and all $g\in G,$ it is the case that
$$g\cdot (\eta\mu)\;=\;(g\cdot \eta) ( g|_{\eta}\cdot \mu).$$
The action of $G$ on $X^*$ extends to an action of $G$ on $X^{\omega}$
defined by
$$g\cdot (x_1x_2x_3\ldots )\;=\;(g\cdot x_1) (g|_{x_1}\cdot x_2)(g|_{x_1x_2}\cdot x_3)\;\ldots \;.$$

As defined in \cite{Nekra}, the Nekrashevych $C^*$-algebra ${\mathcal O}_{(G,X)}$ associated to a self similar action $(G,X)$ is the universal $C^*$-algebra generated by isometries $\{s_x\}_{x\in X}$ and a unitary representation $\{u_g\}_{g\in G}$ satisfying expected Cuntz-like relations and the covariance equation $u_g s_x\;=\; s_{g\cdot x} u_{g|_{x}},\;\forall g\in G$ and $x\in X.$  It so happens that ${\mathcal O}_{(G,X)}$ is isomorphic to the $C^*$-algebra of the tight groupoid associated to the inverse semigroup $S_{G,X},$ where
$$S_{G,X}\;=\{(\eta, g, \mu): \eta,\mu \in X^*, g\in G\}\cup \{0\},$$
and
$$(\eta, g, \mu) (\gamma, h, \nu)\;=\;\left\{ \begin{array}{cc}  (\eta (g\cdot \epsilon), g|_{\epsilon} h,\nu ) & \;\text{if}\; \gamma=\mu\epsilon, \\   (\eta, g (h^{-1}|_{\epsilon})^{-1},\nu (h^{-1}\cdot \epsilon ) & \;\text{if}\;  \mu=\gamma\epsilon, \\
0& \; \text{otherwise}, \\
\end{array} \right.$$
with
$$(\eta, g, \mu)^*\;=\;(\mu, g^{-1}, \eta).$$
The set of idempotents of $S_{G,X}$ is given by $E(S_{G,X})$ computed to be:
$$E(S_{G,X})\;=\;\{(\mu, 1_G, \mu): \mu\in X^*\}\cup \{0\}.$$
It is known that the tight spectrum of $E(S_{G,X})$ ia homeomorphic to $X^{\omega},$ and the standard action of $S_{G,X}$ on this tight spectrum has been determined to be as follows:  $\theta(\eta, g, \mu):C(\mu)\;\to\;C(\eta);\;\text{for}\; \theta(\eta, g, \mu) (\mu \omega)\;=\;\eta (g\cdot \omega).$   See Section \ref{sec:tight-groupoid} for details.

As in the previous subsection, we thus obtain the ample, minimal groupoid ${\mathcal G}_{G,X}$ defined to be ${\mathcal G}_{\text{tight}}(S_{G,X}).$  The action of $G$ on $X^*$ is faithful, so that ${\mathcal G}_{G,X}$ is effective, and it is known that the Nekrashevych $C^*$-algebra ${\mathcal O}_{(G,X)}$ is isomorphic to the groupoid $C^*$-algebra $C^*({\mathcal G}_{G,X}),$ \cite{EP}.  We shall henceforth assume this fact.

\subsection{The Steinberg algebra of an ample groupoid and the ideal of singular functions}
Let $\G$ be a topological groupoid. Following \cite{CEPSS}, the groupoid $\G$ is \emph{ample} if it is \'etale and admits a basis for the topology consisting of compact open bisections. If $\G$ is an ample groupoid and $\K$ is a field, then the Steinberg algebra of $\G$ with coefficients in $\K$ is the set
\begin{equation}\label{eq:def Steinberg algebra}
A_\K(\G)=\operatorname{span}\{1_B\mid B \text{ is a compact open bisection}\}
\end{equation}
with the operation of convolution of functions on $\G$.
An element $f$ in $A_\K(\G)$ has form $f=\sum_{B\in \FF}a_B1_B$, where $\FF$ is a finite collection of compact open bisections and $a_B\in \K$. As indicated in \cite[Section 3.1]{CEPSS}, see also \cite[Remark 2.5]{CFST}, there is an alternative description of $f$ resulting from  disjointifying  $\FF$. Moreover, sets of the form described below have appeared elsewhere, see in particular the proof of \cite[Theorem 4.2]{Nekra-growth}. Explicitly, for each $\emptyset \neq \JJ\subseteq \FF$ let
\begin{equation}\label{eq:M disjointified}
M_\JJ=(\bigcap_{B\in \JJ}B)\setminus (\bigcup_{D\notin \JJ}D).
\end{equation}
Then $\bigcup_{F\in \FF}F$ is the disjoint union  $\bigcup_{\JJ}M_\JJ$, where we can express the latter by listing $\JJ$ according to its size, $1\leq |\JJ|\leq |\FF|$. Further, for each $\emptyset \neq \JJ\subseteq \FF$ set
\[
c_\JJ=\sum_{B\in \JJ} a_B=f\vert_{M_{\JJ}}.
\]
In all, we have
\begin{equation}\label{eq:function disjointification}
f=\sum_{\emptyset \neq \JJ\subseteq \FF}c_\JJ1_{M_\JJ}.
\end{equation}
It follows that a function in $A_\K(\G)$ can be nonzero exactly on the union of finitely many sets given as in \eqref{eq:M disjointified}.

 A function $f$ in $A_\K(\mathcal{G})$ is \emph{singular}, see \cite[Section 3]{CEPSS},  provided that
\[
f=\sum_{i=1}^n c_i1_{S_i}
\]
where each $S_i$ is a relatively closed subset of some open bisection $O_i$, and thus of the form $S_i=O_i\cap C_i$ with $C_i$ a closed set, such that  $\overset{\circ}{S_i}=\emptyset$.

Given $f\in A_\K(\mathcal{G})$, its support is defined as $\operatorname{supp}(f)=\{x\in \G\mid f(x)\neq0\}$. Note that the support is not necessarily closed.
It is known that $f\in A_\K(\mathcal{G})$ is singular if and only if $\operatorname{supp}(f)$ has empty interior, \cite[Proposition 3.6]{CEPSS}. Further, the set $S_\K(\mathcal{G})$ of singular functions is an ideal of $A_\K(\mathcal{G})$, see \cite{CEPSS} for the case that $\mathcal{G}$ is second-countable {\'e}tale and has Hausdorff unit space and \cite{Stei-Sza-Simpl} in general.

We shall assume that $\mathcal{G}$ is \emph{countably} ample, in the sense that it is an \'etale groupoid that has a countable base of compact open bisections.  In particular, for
each $z\in\mathcal{G}^{(0)}$ there is a neighborhood basis $(U_m(z))_{m\in \N}$ consisting of compact open bisections. In our examples we will  have that $U_m(z)\subseteq U_n(z)$ when $m\geq n$ for each $z$ in a finite subset of the unit space $\mathcal{G}^{(0)}$. (This does not follow automatically in the non-Hausdorff case.)

The authors of \cite{CEPSS} identify important characterizations of when the support of a function in $A_\K(\mathcal{G})$ is non-empty by means of the notion of \emph{regular open set}. We next detail some material that explores  the structure of non-Hausdorff groupoids. In particular the notion of regular open set is pivotal.
The section below is not needed for the sequel of the paper, but we thought it interesting as it generalizes sufficient conditions for the existence of empty interior in Lemma \ref{lem:slight-improv-Lemma-2.1}.

\subsection{Regular open sets} We recall from \cite[Section 9]{GH} that an open set $P$ in a topological space ${X}$ is called \emph{regular open} provided that $P$ is equal to the interior of its own closure. For a subset $A$ of ${X}$, we denote the complement of $A$ by $A'$, the interior of $A$ by $A^\circ$ and the closure of $A$ by $A^{-}$. There are well-known identities:
\[
A^{-}=((A')^\circ)'\text{ and }A^\circ=((A')^{-})',
\]
or, simply written  $A^{-}={{A'}^{\circ}}'$ and $A^\circ={{A'}^{-}}'$. As in \cite{GH}, we let $A^\perp={A^{-}}'$. Given a subset $P$ of $\mathcal{X}$, we have $P^{-}={P^\perp}'$, so
\[
{P^{-}}^\circ=({P^\perp}')^\circ=({P^\perp}^{-})'=P^{\perp\perp}.
\]
In other words, an open set is regular open precisely when $P=P^{\perp\perp}$. Note that the forward implication is always true, thus the non-trivial claim for a regular open set is that $P^{\perp\perp}\subset P$.

An intersection of regular open sets is again regular open. By contrast, a \emph{union of regular open sets need not be regular open}. The main reason this happens is that a typical open set that is not regular open arises as an open set with "cracks", cf. \cite[Section 89, page 61]{GH}. For example, $\mathbb{R}^2$ with finitely many points (or a line) removed will be an open set that is not regular open. When taking the union of regular open sets, a nonregular set may arise if the sets have cracks between them, cf. \cite[Section 10, page 67]{GH}.

In the case that $X$ is a topological groupoid $\G$ that is second countable, ample and effective, then every compact open bisection is regular open, \cite[Lemma 2.5]{CEPSS}. In general, it can happen that a union of compact open bisections is not regular open, as shown for the case of the Grigorchuk group in \cite[Section 5]{CEPSS}.

It was observed in \cite[Lemma 2.1]{CEPSS} that if $B$ and $D$ are regular open sets in $X$ such that $B\setminus D$ is nonempty, then $B\setminus D$ has nonempty interior. This result is the main ingredient in providing the characterisation in \cite[Lemma 3.1]{CEPSS} of when a function in $A_K(\mathcal{G})$  has nonempty interior. Since in the application to (ample effective) groupoids the set $B$ will be a finite intersection of compact open bisections, and thus regular open, while $D$ will be a finite union of compact open bisections, and thus not necessarily regular open, it is natural to relax the assumptions in \cite[Lemma 2.1]{CEPSS} by dropping the requirement that $D$ should be regular open.

To this end, note that we have the following identity  for all open sets $B$ and $D$ in a (not necessarily Hausdorff) topological space $X$:
\begin{equation} \label{eq:splitting the B over D and D perp perp}
B \setminus D =\Big(  B \setminus D^{\perp \perp} \Big) \bigsqcup \Big(  B \cap \big( D^{\perp \perp}\setminus D \big)\Big).
\end{equation}
Clearly, if $D$ is regular open, then \eqref{eq:splitting the B over D and D perp perp} is trivial. Thus the identity is interesting in the case that $D\subsetneq D^{\perp\perp}$.

	\begin{lemma}\label{lem:slight-improv-Lemma-2.1} Let $B$ be a regular open set, and $D$ an open set  such that $B \cap D$ is regular open. If  $B \backslash D$ is nonempty, then $B\setminus D$ has nonempty interior. In fact, if $B\setminus D^{\perp\perp}$ is nonempty, then $B\setminus D^{\perp\perp}$ and hence $B\setminus D$ have nonempty interior.	\end{lemma}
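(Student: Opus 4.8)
The plan is to deduce the statement from the identity \eqref{eq:splitting the B over D and D perp perp} together with the basic fact, recalled in the excerpt from \cite[Lemma 2.1]{CEPSS}, that the difference of two regular open sets, if nonempty, has nonempty interior. First I would observe that it suffices to prove the second assertion: if $B\setminus D^{\perp\perp}$ is nonempty then it has nonempty interior, and since $B\setminus D^{\perp\perp}\subseteq B\setminus D$, this immediately gives that $B\setminus D$ has nonempty interior in that case. To handle the first assertion, I would split according to \eqref{eq:splitting the B over D and D perp perp}: if $B\setminus D$ is nonempty, then at least one of the two pieces $B\setminus D^{\perp\perp}$ and $B\cap(D^{\perp\perp}\setminus D)$ is nonempty. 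In the first case we are done by the second assertion. In the second case, note that $B\cap(D^{\perp\perp}\setminus D)\subseteq B\setminus D$, so it is enough to show this middle term has empty interior — wait, that is the wrong direction; instead I should argue that if the first piece is empty then the second piece is also empty, forcing $B\setminus D$ itself to be empty, contradicting the hypothesis. Let me reorganize: the cleanest route is to show directly that $B\setminus D$ and $B\setminus D^{\perp\perp}$ have the same interior-nonemptiness status, by proving the middle term $B\cap(D^{\perp\perp}\setminus D)$ has empty interior, and then invoking that $B\setminus D^{\perp\perp}$, being a difference of regular open sets, has nonempty interior whenever it is nonempty.

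Concretely, the key steps in order would be: (1) Prove $D^{\perp\perp}\setminus D$ has empty interior. This is because $D$ is open and dense in $D^{\perp\perp}$ (indeed $D^{-}=D^{\perp}{}'$, and one checks $D^{\perp\perp}\subseteq D^{-}$, so $D$ is dense in $D^{\perp\perp}$), hence its complement within the open set $D^{\perp\perp}$ is nowhere dense; therefore $D^{\perp\perp}\setminus D$ contains no nonempty open set. (2) Conclude from \eqref{eq:splitting the B over D and D perp perp} that the interior of $B\setminus D$ equals the interior of $B\setminus D^{\perp\perp}$: the inclusion $\supseteq$ is clear since $B\setminus D^{\perp\perp}\subseteq B\setminus D$, and for $\subseteq$ an open subset $U$ of $B\setminus D$ decomposes as $U=(U\setminus D^{\perp\perp})\sqcup(U\cap(D^{\perp\perp}\setminus D))$ with the second summand having empty interior by step (1), so the open set $U$ must be contained in $B\setminus D^{\perp\perp}$ (an open set disjoint from a set with empty interior whose union is $U$ must be the whole thing — more carefully, $U\cap(D^{\perp\perp}\setminus D)$ is open and has empty interior, hence empty, so $U\subseteq B\setminus D^{\perp\perp}$). (3) Now it suffices to prove the second assertion. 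Here $B$ is regular open by hypothesis, and $D^{\perp\perp}$ is always regular open (noted in the excerpt). Hence $B\setminus D^{\perp\perp}$ is a difference of two regular open sets, and by \cite[Lemma 2.1]{CEPSS}, if it is nonempty it has nonempty interior. Finally the hypothesis that $B\cap D$ is regular open can be used, if needed, to see that $B\setminus D$ is actually nonempty iff $B\setminus D^{\perp\perp}$ is, but in light of steps (1)–(2) the equality of interiors already yields the first assertion directly: if $B\setminus D\neq\emptyset$ we cannot yet conclude $B\setminus D^{\perp\perp}\neq\emptyset$ purely set-theoretically, so here is where the regular-openness of $B\cap D$ enters — since $B\cap D$ is regular open we have $(B\cap D)^{\perp\perp}=B\cap D$, and combined with $B$ regular open this forces $B\cap D^{\perp\perp}=B\cap D$, whence $B\setminus D^{\perp\perp}=B\setminus D$ and the middle term of \eqref{eq:splitting the B over D and D perp perp} is empty.

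So the argument really comes down to the identity $B\cap D^{\perp\perp}=B\cap D$ under the hypotheses, after which the lemma is immediate from \cite[Lemma 2.1]{CEPSS}. I would verify $B\cap D^{\perp\perp}=B\cap D$ as follows: one always has $B\cap D\subseteq B\cap D^{\perp\perp}$; conversely $B\cap D^{\perp\perp}\subseteq B^{\perp\perp}\cap D^{\perp\perp}$, and since intersections of regular open sets are regular open, $B^{\perp\perp}\cap D^{\perp\perp}=(B\cap D)^{\perp\perp}$ whenever this identity holds for regular open sets — actually the cleaner fact is $(B\cap D)^{\perp\perp}\subseteq B^{\perp\perp}\cap D^{\perp\perp}$ always and the reverse requires care, so instead I would argue: $B\cap D$ is regular open means $B\cap D=(B\cap D)^{\perp\perp}$; since $B\cap D\subseteq D$ we get $(B\cap D)^{\perp\perp}\subseteq D^{\perp\perp}$, and since $B\cap D\subseteq B=B^{\perp\perp}$ we get $(B\cap D)^{\perp\perp}\subseteq B$; hence $B\cap D=(B\cap D)^{\perp\perp}\subseteq B\cap D^{\perp\perp}$ — this is the direction we already had. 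For the nontrivial inclusion I use that $D\subseteq D^{-}$ with $D$ dense in $D^{\perp\perp}$, so $B\cap D^{\perp\perp}\subseteq \overline{B\cap D}$ when $B$ is open, giving $B\cap D^{\perp\perp}\subseteq (B\cap D)^{-}$; taking interiors and using that $B\cap D^{\perp\perp}$ is open and $(B\cap D)$ regular open yields $B\cap D^{\perp\perp}\subseteq ((B\cap D)^{-})^{\circ}=(B\cap D)^{\perp\perp}=B\cap D$. The main obstacle is keeping the regular-open calculus straight — in particular justifying $B\cap D^{\perp\perp}\subseteq\overline{B\cap D}$, which uses that $B$ is open and $D$ is dense in $D^{\perp\perp}$ — and making sure the density statement $D^{\perp\perp}\subseteq D^{-}$ is applied correctly; everything else is bookkeeping with the identities $A^{-}=(A'^{\circ})'$, $A^{\circ}=(A'^{-})'$ recalled in the excerpt.
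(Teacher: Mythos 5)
Your argument is correct and rests on the same two pillars as the paper's proof: the regular openness of $B$ and of $B\cap D$ is used to pass from $D$ to $D^{\perp\perp}$, and then \cite[Lemma 2.1]{CEPSS} is applied to the pair of regular open sets $B$ and $D^{\perp\perp}$. Your reduction via the identity $B\cap D^{\perp\perp}=B\cap D$ (equivalently, $B\setminus D=B\setminus D^{\perp\perp}$, so that the middle term of \eqref{eq:splitting the B over D and D perp perp} vanishes) is exactly the reformulation the paper records in the remark following the lemma, whereas the paper's own proof of the first claim instead reruns the construction of \cite[Lemma 2.1]{CEPSS} directly, producing the open set $O=(\overline{B\cap D})'$ from the observation $\overline{B\cap D}\subsetneq\overline{B}$; the detour through the empty-interior of $D^{\perp\perp}\setminus D$ in your steps (1)--(2) is superfluous, as you yourself noticed before settling on the final argument.
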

	\begin{proof} The proof of the first claim is verbatim as for \cite[Lemma 2.1]{CEPSS} once we observe that regular openness of $B$ and $B\cap D$ implies that $\overline{D \cap B} \subsetneq  \overline{B}$. This provides the open set $O$ equal to the complement of $\overline{D \cap B}$ which satisfies  $\emptyset \not= B \cap O \subset  B \backslash D $. The last claim follows by an application of \cite[Lemma 2.1]{CEPSS} to the regular open sets $B$ and $D^{\perp\perp}$.
	\end{proof}

We note that the requirement to have $B\cap D$ regular open when $B$ is so in Lemma~\ref{lem:slight-improv-Lemma-2.1} reduces to $B\cap D=(B\cap D)^{\perp\perp}=B^{\perp\perp}\cap D^{\perp\perp}=B\cap D^{\perp\perp}$, by an application of \cite[Lemma 4]{GH} about distributing $^{\perp\perp}$ over finite intersections of open sets.

\section{$\Z_2$-Multispinal groups and the structure of their subgroups of of order $2^{n-1}$}
	\label{section:applications}
	
In this section we will first recall the definition of the $\Z_2$-multispinal groups, which are the focus of our study. In particular they are related to the finite groups associated to the additive structure of finite fields of order $2^n$. After that, we  will detail the structure of  subgroups of order $2^{n-1}$ and associated inclusion matrices. An algebraic lemma at the end of this section shows these inclusion matrices to have full rank.

	\subsection{$\Z_2$-Multispinal groups from primitive polynomials over $\Z_2$}
	\label{def:multispinal}

	We now consider a class of examples of self-similar groups $\mathfrak{G}_n$ introduced by \cite{Su} and shown to be multispinal groups from primitive polynomials by Steinberg and  Szak\'acs, see especially \cite[Theorem 7.10]{Stei-Sza-Contr} for the complete description of the (non)-simplicity pattern of the associated Nekrashevych algebra. We will call them \emph{$\Z_2$-multispinal groups}.
	
	Two useful examples to keep in mind arise from the low-degree primitive polynomials $f_2(x)=x^2+x+1$ and $f_3(x)=x^3+x+1$ over $\Z_2$; the self-similar group associated to $f_2(x)$ is the first Grigorchuk group, see example \ref{ex:Grigo}. In all of these cases the groupoid $\mathcal{G}_n$ associated to the group $\mathfrak{G}_n$  lends itself  to a characterisation of the simplicity pattern of its Steinberg algebra $\mathcal{A}_K(\mathfrak{G}_n)$ and full groupoid $C^*$-algebra $C^*(\mathcal{G}_n)$ using insight from combinatorial design theory, as we will explain in Sections~\ref{section:simplicity} and \ref{section:simplicity C*-algebra}.
	
	In the following, let $\F_2$ be the field with two elements $0,1$ and $f_n(x)$ be a primitive polynomial of degree $n\geq 2$ over $\F_2$. Let $\alpha$  be a zero of $f_n(x)$ (note that we have not added the index $n$ to $\alpha$ to make the notation simpler), and let $\F_2[x]/\langle f_n(x)\rangle$ be the quotient field, identified with the field $\F_{2^n}$ in the standard way
	\[
	\F_{2^n}=\{\beta_0+\beta_1\alpha+\dots +\beta_{n-1}\alpha^{n-1}\mid b_0,\dots, b_{n-1}\in \F_2\}.
	\]
	We remark that as an additive group, $\F_{2^n}$ can be identified with the vector space $(\F_2)^n $ of dimension $n$ over $\F_2.$
	Since $f_n(x)$ is  primitive we have that $\alpha$ is a generator of the cyclic group $\F_{2^n}^*$ on $2^n-1$ elements.
	Let $\varphi_n$ be the automorphism of $\F_{2^n}$ determined by multiplication by $\alpha$. Under the identification
	
	\[
	(\beta_0,\beta_1,\dots,\beta_{n-1})\mapsto \beta_0+\beta_1\alpha+\dots +\beta_{n-1}\alpha^{n-1}
	\] of $\Z_2^n$ with $\F_2^n$ as additive groups, the map $\varphi_n$ determines an automorphism $\phi_n$ of $\Z_2^n$ whose effect on $(0,1,0,\cdots,0)$ is to produce $\alpha^2$; continued application of $\phi_n$ gives  the table of all the powers $\alpha^j$, $j=1,\dots, 2^n-1$.  The map $\phi_n$ is known as a "feedback shift register". For  $f_3(x)$ it is given by $(\beta_0,\beta_1,\beta_2)\mapsto (\beta_2, \beta_0+\beta_2, \beta_1)$. Feedback shift registers associated with primitive polynomials of arbitrary degree $n\geq 1$  are of interest in generating pseudo-random numbers, and are used among others in error correcting codes, see for example \cite[Section 5.6]{Ter}.

Consider the trace  map
	$\Tr:\F_{2^n}\to \Z_2$, defined for all $\beta \in \F_{2^n}$, by:
	 \[\Tr(\beta):=\beta+\beta^2+\dots + \beta^{2^{n-1}}.\]
	 (To simplify the notation, we will not add the index $n$ to $\Tr$ as it will be clear from the context what $n$ is.) It is well-known that $\Tr$ is a  homomorphism of abelian groups, see e.g. \cite[Chp. V, section 7]{Hun}.
	
In order to distinguish in an easier way between group elements and the space they act on, we let $X$ be an alphabet on two letters $\bfz,\bfo$. Following \cite[Section 7.1]{Stei-Sza-Contr}, form the multispinal group  $\mathfrak{G}_n$ associated to $G_n :=\F_{2^n}$ (under addition), $H=\Z_2$ acting on itself by the left regular representation $\lambda$, with $\lambda_1$ swapping the basis elements $\delta_0$ and $\delta_1$ of $\ell^2(\Z_2)$, and the map $\Phi:\{\bfz,\bfo\}\to \operatorname{Aut}({G}_n)\cup \operatorname{Hom}({G}_n, H)$ given by $\Phi(\bfz):=\Tr$ and $\Phi(\bfo):=\varphi_n$. In the sequel, these will be called \emph{$\Z_2$-multispinal groups}. So $\mathfrak{G}_n$ contains a copy of $\F_{2^n}$, identified as $\Z_2^n$, and of $\Z_2$ in such a way that the identities $0_{G_n}$ and $0_H$ are identified as the identity $e$ in $\mathfrak{G}_n$. Furthermore, we have that $\bigcap_{j=0}^{2^n-2}\ker(\Tr\circ\varphi_n^j)=\{0\}$.
	
	We will now explain how these groups can be viewed as
	self-similar.
	Denote by $a$  the image in $\mathfrak{G_n}$ of $1\in \Z_2$.  In the self-similar group $\mathfrak{G}_n$ the action of $a$ on $X$ replicates that of $1\in H$ on itself, thus
	\[a\cdot\bfz=\bfo, a\cdot\bfo=\bfz.
	\]

 For every $\beta\in \F_{2^n}$ let $\beta\cdot x=x$ for $x=\bfz,\bfo$ and define the  restrictions in $\mathfrak{G}_n$ by $g\vert_\bfz=g\vert_\bfo=e$ for $g\in \Z_2$ and
	\[
	\beta\vert_\bfo:=\varphi_n(\beta)=\alpha\beta\text{ and }\beta\vert_\bfz:=\Tr(\beta).
	\]
	Let
	 \[ \iota_n:\F_{2^n}\to \mathfrak{G}_n\]
	  be the embedding which identifies $x\in \F_{2^n}$ with its image in $\mathfrak{G}_n$. Let $\mathcal{N}_{0,n}$ be the set  consisting of $e$ and $\iota_n(\alpha^j)$, with $j=1,\dots,2^n-1$:
	  \[
	  \mathcal{N}_{0,n} := \{e, \iota_n(\alpha^j), \  j=1,\dots,2^n-1\}.
	  \]
	   By \cite{Stei-Sza-Contr}, the \emph{nucleus of $\mathfrak{G}_n$} is $\mathcal{N}:=\mathcal{N}_{0,n}\sqcup \{a\}.$ Extend the restriction rules by setting, for $g=\iota_n(x)\in \mathfrak{G}$,
	\begin{equation}\label{eq:restriction to one}
	g\vert_{\bfo}:=\iota_n(\varphi_n(x)) \text{ and }g\vert_{\bfz}:=\iota_n(\Tr(x)).
	\end{equation}
	It is immediate that $g\vert_{\bfo^j}=\iota_n(\varphi^j_n(x))$  for all $j\in \N$.
	
\begin{remark} It is natural to ask whether $\mathfrak{G}_n$ depends on the primitive polynomial $f_n(x)$. Let us look at the case $n=3$. There are two primitive polynomials over $\mathbb{F}_2$ of degree dividing $n$, let us denote them $f_{3,1}(x)=x^3+x+1$ and $f_{3,2}(x)=x^3+x^2+1$. Note that
\[
x^8-x=x(x-1)(x^3+x+1)(x^3+x^2+1).
\]
If we let $\alpha$ be a zero of $f_{3,1}(x)$, then $\alpha^2, \alpha^4$ are the other zeros of this polynomial and $\alpha^3, \alpha^5,\alpha^6$ are the zeros of $f_{3,2}(x)$. In particular, the subset $\mathcal{N}_{0,3}$ of the nucleus of  $\mathfrak{G}_3$ does not depend on the primitive polynomial, since it consists of the images of the zeros of $x^8-x$. More generally, $x^{2^n}-x$ admits a factorization over $\F_2$ as the product of all the distinct irreducible polynomials of degrees that divide $n$, see e.g. \cite{Ter}. This suggests that the simplicity of the Steinberg algebra of $\mathfrak{G}_n$ over a field $\mathbb{K}$ of characteristic zero does not depend on the primitive polynomial that we pick at the start.
\end{remark}

	\subsection{Structure of subgroups of order $2^{n-1}$}
	
	We will now  detail the structure of  subgroups of order $2^{n-1}$ of the $\Z_2$-multispinal groups $\mathfrak{G}_n$.
	
	For each $j=0,\dots,2^n-2$, define
\begin{equation}\label{eq:subgroups Hj}
  H_{j,n}:=\ker(\Tr\circ \varphi_n^j);
\end{equation}
note that these are $2^n-1$ distinct subgroups  of $\F_{2^n}$ of order $2^{n-1}$.  Also note that viewing $\F_{2^n}$ as the vector space $(\F_2)^n$ of dimension $n$ over $\F_2,$ then the $\{H_{j,n}\}_{j=0}^{2n-2}$ are precisely the $2^n-1$ distinct subspaces of dimension $(n-1)$ over $\F_2.$  Another way of parametrizing these subspaces is to take all non zero vectors $\{\vec{b}=(b_1,b_2,\dots,b_n)| b_i\in \F_2\}$ in $(\F_2)^n.$  Let $D_{\vec{b}}$ be the homomorphism from $(\F_2)^n$ onto $\F_2$ given by the dot product: $D_{\vec{b}}(\vec{x})= \vec{x}\cdot \vec{b}.$  Then the subspaces of dimension  $n-1,\;\{H_{j,n}\}_{j=0}^{2n-2},$ each of order $2^{n-1},$ are in one-to-one correspondence with the subgroups  $\{\text{ker}(D_{\vec{b}}):
	\;\vec{b}\in\;(\F_2)^n\backslash\;\{\vec{0}\}\},$ which are subgroups of index $2$ so also each have order $2^{n-1}.$
	
	\begin{lemma}
		\label{Key_intersection_ala_Nat}
		Consider all the subgroups $\{H_{j,n}\}_{j=0}^{2^n-2}$ of order $2^{n-1}$ of $\F_{2^n}$ described above. Let $q=2^{n-1}$ and $k=2^n-1$ so that $k=2q-1.$  Let $\{\alpha^0=1,\alpha^1,\alpha^2,\cdots, \alpha^{2^n-2}=\alpha^{2q-2}=\alpha^{k-1}\}$ denote all the non-zero elements in $\F_{2^n}^*.$  Fix any two distinct integers $\ell_1,\ell_2\in\{0,1,\dots,2q-2=k-1=2^n-2\}.$
		Then
		$$|\{j\in \{0,1,\dots, 2^n-2=k-1\mid \;\{\alpha^{\ell_1}, \alpha^{\ell_2}\}\subset H_{j,n}\}|= 2^{n-2}-1=\frac{q}{2}-1.$$
	\end{lemma}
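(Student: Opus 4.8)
The plan is to realise each subgroup $H_{j,n}$ as the kernel of an explicit $\F_2$-linear functional on $\F_{2^n}$, and then — via the trace pairing — to count how many of these functionals vanish on the two-dimensional $\F_2$-subspace spanned by $\alpha^{\ell_1}$ and $\alpha^{\ell_2}$.

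First, since $\varphi_n$ is multiplication by $\alpha$ we have $\varphi_n^j(\beta)=\alpha^j\beta$, so $H_{j,n}=\ker\psi_j$ where $\psi_j$ is the functional $\beta\mapsto\Tr(\alpha^j\beta)$. The assignment $c\mapsto(\beta\mapsto\Tr(c\beta))$ is an $\F_2$-linear map from $\F_{2^n}$ to its dual space, and it is injective: if $c\neq 0$ then $\beta\mapsto c\beta$ is a bijection of $\F_{2^n}$, hence $\Tr(c\beta)$ attains the value $1$ for some $\beta$ because $\Tr$ is onto $\F_2$. By a dimension count it is therefore an isomorphism, and as $j$ runs over $\{0,1,\dots,k-1\}$ the element $\alpha^j$ runs over $\F_{2^n}^*$; so $\{\psi_j\}_{j=0}^{k-1}$ is exactly the set of nonzero functionals on $\F_{2^n}$, each occurring once. (This re-proves the earlier assertion that the $H_{j,n}$ are the $2^n-1$ distinct hyperplanes; alternatively one may just quote that assertion.)

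Now put $W:=\operatorname{span}_{\F_2}\{\alpha^{\ell_1},\alpha^{\ell_2}\}\subseteq\F_{2^n}$ and let $W^{\circ}=\{\phi\in(\F_{2^n})^*:\phi|_W=0\}$ be its annihilator. Since $\psi_j\in W^{\circ}$ if and only if $\{\alpha^{\ell_1},\alpha^{\ell_2}\}\subseteq H_{j,n}$, and the zero functional is the unique one not of the form $\psi_j$, the quantity to be computed equals $|W^{\circ}|-1$. The elements $\alpha^{\ell_1}$ and $\alpha^{\ell_2}$ are distinct and nonzero, and the only nonzero scalar of $\F_2$ is $1$, so they are $\F_2$-linearly independent and $\dim_{\F_2}W=2$; hence $\dim_{\F_2}W^{\circ}=n-2$, $|W^{\circ}|=2^{n-2}$, and the count is $2^{n-2}-1=q/2-1$, as claimed.

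The argument has essentially no obstacle beyond bookkeeping; the one point that needs care is the enumeration of all hyperplanes by $j\mapsto H_{j,n}$ without repetition (equivalently, nondegeneracy of the trace form), which is already recorded in the text. A more combinatorial route is also available: working in $\Z/(2^n-1)$ and using that exactly $2^{n-1}-1$ of the powers $\alpha^m$ have trace $0$, together with the two-valued out-of-phase autocorrelation of the $m$-sequence $\bigl(\Tr(\alpha^m)\bigr)_m$ attached to the primitive polynomial $f_n$, a $2\times 2$ row/column count for the pair $\bigl(\Tr(\alpha^{j+\ell_1}),\Tr(\alpha^{j+\ell_2})\bigr)$ forces the "both zero" entry to be $2^{n-2}-1$; but that needs the autocorrelation identity, so the dual-space computation above is the cleaner one.
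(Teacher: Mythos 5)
Your proof is correct. It establishes the same count as the paper but by a different mechanism: the paper parametrizes the hyperplanes of $(\F_2)^n$ by the dot-product functionals $D_{\vec b}$, invokes a linear transformation $T\in GL(n,\F_2)$ sending $\alpha^{\ell_1},\alpha^{\ell_2}$ to $\vec e_1,\vec e_2$ (which permutes the hyperplanes among themselves), and then counts explicitly the nonzero $\vec\beta$ with $\beta_1=\beta_2=0$; you instead make the parametrization canonical via the trace pairing $c\mapsto(\beta\mapsto\Tr(c\beta))$, observe that $j\mapsto\psi_j$ bijectively enumerates the nonzero functionals because $\alpha$ is primitive, and read off the answer as $|W^{\circ}|-1$ for the annihilator of the $2$-dimensional subspace $W$. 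The two arguments are equivalent in substance, but yours buys a cleaner identification of exactly which functional corresponds to which $H_{j,n}$ (the paper's one-to-one correspondence between the $H_{j,n}$ and the $\ker(D_{\vec b})$ is asserted rather than exhibited), and it avoids the transitivity step entirely; the paper's version, on the other hand, stays closer to the coordinate picture of $(\F_2)^n$ used elsewhere in Section 3. The only ingredient you rely on beyond bookkeeping is nondegeneracy of the trace form, which you prove correctly from surjectivity of $\Tr$ and which is in any case implicit in the paper's statement that each $H_{j,n}$ has order $2^{n-1}$.
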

	\begin{proof} To simplify the notation in  this proof we call $H_{j,n}$ simply $H_j$.
		We note that
\begin{align*}
c_j:=&|\{j\in \{0,1,\dots, 2^n-2=k-1\}\mid \;\{\alpha^{\ell_1}, \alpha^{\ell_2}\}\subset H_j\}|\\
&=|\{\vec{\beta}\in (\F_2)^n\backslash\{\vec{0}\}\mid \;\{\alpha^{\ell_1}, \alpha^{\ell_2}\}\subset \text{ker}(D_{\vec{\beta}})\}|
\end{align*}
		since we have set up a one-to-one correspondence between the additive subgroups $\{H_{j}\}_{j=0}^{2n-2}$ of $F_{2^n}$ and the $(n-1)$-dimensional subspaces  	$\text{ker}(D_{\vec{\beta}})_{\vec{\beta}\in (\F_2)^n\backslash\{\vec{0}\}}.$
		But now note that letting $\vec{e_1}=(1,0,\dots,0)\in (\F_2)^n,$ and $\vec{e_2}=(0,1,0,\dots,0)\in (\F_2)^n,$ we have
\begin{align*}
c_j&=|\{\vec{\beta}\in (\F_2)^n\backslash\{\vec{0}\}\mid \;\{\alpha^{\ell_1}, \alpha^{\ell_2}\}\subset \text{ker}(D_{\vec{\beta}})\}|\\
&=|\{\vec{\beta}\in (\F_2)^n\backslash\{\vec{0}\}\mid \;\{\vec{e_1}, \vec{e_2}\}\subset \text{ker}(D_{\vec{\beta}})\}|.
\end{align*}

		This is by symmetry of $(\F_2)^n:$ for $n\geq 2,$ we can find a linear transformation $T\in GL(n,\F_2)$ that takes $\alpha^{\ell_1}$ to $\vec{e_1}$ and  $\alpha^{\ell_2}$ to $\vec{e_2}.$  The effect of the linear transformation $T$ on the $(n-1)$-dimensional subspaces is to permute them amongst themselves.  Therefore,  we get the equality of cardinality amongst  $(n-1)$-dimensional subspaces.
		Finally,
		\begin{align*}
			c_j&=|\{\vec{\beta}\in (\F_2)^n \backslash\{\vec{0}\} \mid \;\{\vec{e_1}, \vec{e_2}\}\subset \text{ker}(D_{\vec{\beta}})\}|\\
&=|\vec{\beta}\in (\F_2)^n\backslash\{\vec{0}\}\mid\;\vec{\beta}\cdot \vec{e_1}=0\;\text{and}\; \vec{\beta}\cdot \vec{e_2}=0\}|\\
			&=\;|\{\vec{\beta}=(\beta_1,\beta_2,\dots, \beta_n) \in (\F_2)^n\backslash\{\vec{0}\}\mid\;\vec{\beta}\cdot \vec{e_1}=0\;\text{ and }\; \vec{\beta}\cdot \vec{e_2}=0\}|\\
		&=\;|\{(\beta_1,\beta_2,\dots, \beta_n) \in (\F_2)^n\backslash\{\vec{0}\}\mid\;\beta_1=0 \text{ and }\beta_2=0\}|=2^{n-2}-1\\
&=\frac{q}{2}-1.
		\end{align*}
		We thus obtain the claim of the Lemma.
			\end{proof}	

The following lemma will be key  in showing some of our simplicity claims.

	\begin{lemma}
		\label{primitivepolynomial_lemma_over_F2}
		Let $\F_2$ be the field with two elements $0,1$ and $f_n(x)$ be a primitive polynomial of degree $n\geq 2$ over $\F_2$. Let $\alpha$ be a zero of $f_n(x)$, and let $\F_2[x]/\langle f_n(x)\rangle$ be the quotient field, identified with the field $\F_{2^n}$ in the standard way
		\[
		\F_{2^n}=\{\beta_0+\beta_1\alpha+\dots +\beta_{n-1}\alpha^{n-1}\mid b_0,\dots, b_{n-1}\in \F_2\}.
		\]
		Let $q=2^{n-1}$ and $k=2^n-1$ so that $k=2q-1.$  Let $\{0,1=\alpha^0,\alpha^1,\dots, \alpha^{k-1}=\alpha^{2q-2}\}$ denote an enumeration of $(\F_{2^n})^*\cup\{\vec{0}\}=(\F_2)^n.$ Let $\{H_{j,n}\}_{j=0}^{2^n-1}$ denote the subgroups defined in equation~\eqref{eq:subgroups Hj}. Let $W_n$ be the $(2q=2^n)\times (2k=2(2^{n}-1))$ inclusion matrix whose rows are indexed by elements of $(\F_2)^n$ and whose columns are indexed by the subsets $\{H_{j,n}\}^{2^n-1}_{j=0}$ and their complements $\{H_{j,n}^c=(\F_2)^n\backslash H_{j,n}\}_{j=0}^{2^n-1},$ with the subgroups listed first and their respective complements indexing the column $k$ units later, i.e.
		\[(W_n)_{\vec{x},H_{j,n}}\;=\begin{cases}1,&\text{ if } \vec{x}\in H_{j,n}\\
		0,&\text{ if } \vec{x}\notin H_{j,n}\ ,\end{cases}
		\]
		and
		\[(W_n)_{\vec{x},(H_{j,n})^c}\;=\begin{cases}1,&\text{ if } \vec{x}\in H_{j,n}^c\\
		0,&\text{ if } \vec{x}\notin H_{j,n}^c\ \end{cases}
		\]
for $\vec{x}\in (\F_2)^n$. Then $W_n$ is a $2q\times 2k$ matrix that has full rank $2q.$
	\end{lemma}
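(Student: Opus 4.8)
The plan is to compute the rank of $W_n$ over $\Q$ (equivalently over any field of characteristic zero) by exhibiting its Gram-type matrix $W_n W_n^{T}$ and showing the latter is invertible. Since $W_n$ has $2q = 2^n$ rows and $2k = 2(2^n-1)$ columns, full row rank $2q$ is the maximum possible, and it is equivalent to invertibility of the $2^n \times 2^n$ matrix $W_n W_n^{T}$. First I would index rows by $\vec{x} \in (\F_2)^n$ and split $W_n = [\, A \mid B\,]$ where $A$ is the $2^n \times (2^n-1)$ incidence matrix of points versus the hyperplanes $H_{j,n}$, and $B$ is the incidence matrix of points versus the complements $H_{j,n}^c$; note $B = J - A$ where $J$ is the all-ones matrix of the appropriate size. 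Then
\[
W_n W_n^{T} = A A^{T} + B B^{T} = A A^{T} + (J_0 - A)(J_0 - A)^{T},
\]
where $J_0$ is the $2^n \times (2^n-1)$ all-ones matrix; expanding gives $W_n W_n^{T} = 2 A A^{T} - A J_0^{T} - J_0 A^{T} + (2^n-1) \bfo\bfo^{T}$, with $\bfo$ the all-ones column vector of length $2^n$.

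The entries of $A A^{T}$ are governed exactly by the combinatorics already established. Each hyperplane $H_{j,n}$ contains $2^{n-1}-1$ nonzero points and the zero vector, so it contains $q = 2^{n-1}$ points; hence every diagonal entry of $A A^{T}$ counts the number of hyperplanes through a fixed point $\vec{x}$. For $\vec{x} = \vec 0$ this is all $2^n-1$ of them, and for $\vec{x}\neq\vec 0$ it is $2^{n-1}-1$ by a standard count (the hyperplanes through a fixed nonzero vector correspond to linear functionals vanishing on it). The off-diagonal entry of $A A^{T}$ at $(\vec{x},\vec{y})$ with $\vec{x}\neq\vec{y}$ is the number of hyperplanes containing both; \lemref{Key_intersection_ala_Nat} handles the case $\vec x,\vec y$ both nonzero, giving $2^{n-2}-1$, and the case where one of them is $\vec 0$ reduces again to "hyperplanes through a fixed nonzero vector", giving $2^{n-1}-1$. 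Similarly $A J_0^{T}$ has constant rows: the $\vec x$-row sums the number of hyperplanes through $\vec x$, which is $2^n-1$ if $\vec x = \vec 0$ and $2^{n-1}-1$ otherwise. Assembling these pieces, $W_n W_n^{T}$ is a fully explicit integer matrix whose structure is: a scalar multiple of the identity, plus rank-one corrections supported on the all-ones vector and on the indicator $e_{\vec 0}$ of the zero vector. Concretely one should find $W_n W_n^{T} = c\, I + (\text{low-rank correction})$ for a nonzero scalar $c$ (I expect $c = 2^{n-1}$, coming from $2(2^{n-2}) - \text{cross terms}$ after the constant parts cancel).

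The final step is to verify this explicit matrix is invertible over $\Q$. Once $W_n W_n^{T}$ is written as $c I + u v^{T} + e_{\vec 0} w^{T} + (\dots)$ with all corrections lying in the two-dimensional span of $\bfo$ and $e_{\vec 0}$, invertibility follows from the matrix-determinant lemma: the correction is a rank-$\le 2$ perturbation of $cI$, so $\det(W_n W_n^{T}) = c^{2^n - 2}\cdot \det(M)$ where $M$ is an explicit $2\times 2$ matrix in the basis $\{\bfo, e_{\vec 0}\}$, and one checks $\det M \neq 0$ directly. The main obstacle I anticipate is purely bookkeeping: getting the four cases ($\vec x = \vec y = \vec 0$; $\vec x = \vec 0 \neq \vec y$; $\vec x \neq \vec 0 = \vec y$; $\vec x \neq \vec y$ both nonzero) consistently right in all of $A A^{T}$, $A J_0^{T}$, and $J_0 A^{T}$, and then confirming that the genuinely constant ($\bfo\bfo^{T}$) parts contributed by the complements $B$ conspire to cancel so that a clean scalar $c$ survives on the diagonal. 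There is no conceptual difficulty beyond \lemref{Key_intersection_ala_Nat} and the hyperplane counts; it is a finite explicit determinant computation, best organized by passing to the eigenvectors of $cI$ that are orthogonal to both $\bfo$ and $e_{\vec 0}$ (automatically in the kernel of the correction, hence eigenvalue $c$) and treating the remaining $2$-dimensional block separately.
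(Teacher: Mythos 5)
Your proposal is correct, and it takes a genuinely different route from the paper. The paper proves the lemma by feeding the design data of \lemref{Key_intersection_ala_Nat} into the abstract \lemref{lem:full rank general}, whose proof exhibits an explicit rational right inverse $T$ (entries $1/k$ and $-(q-1)/(k(k-q+1))$) and verifies $WT=I$ by a case analysis on the intersection patterns of pairs of rows. You instead compute the Gram matrix $W_nW_n^{T}$, and your bookkeeping does close up: writing $r(\vec x)$ for the number of hyperplanes through $\vec x$ (equal to $k$ for $\vec x=\vec 0$ and to $q-1$ otherwise), one gets
\[
(W_nW_n^{T})_{\vec x,\vec y}\;=\;2(AA^{T})_{\vec x,\vec y}-r(\vec x)-r(\vec y)+k,
\]
and all four cases ($\vec x=\vec y=\vec 0$; $\vec x=\vec y\neq\vec 0$; exactly one of them $\vec 0$; both nonzero and distinct, where \lemref{Key_intersection_ala_Nat} gives $q/2-1$) collapse to the single identity $W_nW_n^{T}=q\,I+(q-1)J$ with $J$ the all-ones $2q\times 2q$ matrix. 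So the correction is in fact rank one rather than rank two --- the $e_{\vec 0}$-supported terms cancel exactly as you hoped --- and your predicted scalar $c=2^{n-1}=q$ is right. Since $q\,I+(q-1)J$ has eigenvalues $q$ (multiplicity $2q-1$) and $qk$ (multiplicity $1$), it is positive definite, hence $W_n$ has full row rank $2q$ over $\Q$, which is the field relevant to the application. Your route is arguably cleaner as a pure rank statement; what the paper's construction buys is the explicit right inverse $T_n$ itself, whose transpose is reused quantitatively in \lemref{CEPSSLemma5.31} to produce the pointwise lower bound $|f(z)|>|f(z_e)|/2^n$ needed for the $C^*$-algebraic simplicity argument, so if you adopted the Gram-matrix proof you would still need to extract an explicit (pseudo)inverse, e.g.\ $W_n^{T}(W_nW_n^{T})^{-1}$, to play that role later.
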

	
	\begin{proof}  Lemma \ref{Key_intersection_ala_Nat} and the recursive construction of the subgroups $\{H_{j,n}\}_{j=0}^{2^n-1=k-1}$ shows that the matrix $W_n$ satisfies the conditions (R1) through (R5) of Lemma  \ref{lem:full rank general}, which we postpone to Section~\ref{subsection: general form of inclusion matrix} because of its algebraic nature.  It follows from Lemma \ref{lem:full rank general} that $W_n$ has rank $2q.$
	\end{proof}
	
	We shall also need the images of the groups $H_{j,n}$ inside the $\Z_2$-multispinal group $\Gf_n$. Explicitly, for $j=0,\dots,2^n-2$ let
\begin{equation}\label{eq: frakHj}
  \Hf_{j,n}=\iota_n(H_{j,n})\;\subset\; \Gf_n.
\end{equation}

	We  illustrate the above concepts in  the case of the first Grigorchuk group $G:=\mathfrak{G}_2$.

	\begin{example}\label{ex:Grigo}\emph{The first Grigorchuk group as a $\Z_2$-multispinal group.}

 Let $e,b,c,d, a$ denote the generators of this group, cf. \cite{Gri1}, see also \cite{Nekra-growth}, \cite{CEPSS}, \cite{Stei-Sza-Contr} and \cite{Y} for assertions on simplicity.   We use here the realisation of the first Grigorchuk group as a $\Z_2$-multispinal group associated with $H=\Z_2$ and $\F_4$, see comment after \cite[Theorem 7.10]{Stei-Sza-Contr}. As already mentioned, this case corresponds to the irreducible polynomial $f_2(x)=x^2+x+1$.
	
	Then with $b:=\alpha$ a zero of $f_2$, we have
	\begin{align*}
		b\vert_\bfo&=\varphi(\alpha)=\alpha^2=1+\alpha,\\
		b\vert_{\bfo^2}&=\varphi(1+\alpha)=\alpha(1+\alpha)=1.
	\end{align*}
	We have three order-two subgroups $H_{0,2}=\ker(\Tr)=\{0,1\}$, $H_{1,2}=\ker(\Tr\circ \varphi_2)=\{0,1+\alpha\}$ and $H_{2,2}=\ker(\Tr\circ \varphi_2^2)=\{0,\alpha\}$ in $\F_4$, with corresponding images in $\mathfrak{G}_2$ given by  $\Hf_{0,2}= \{e,d\}$, $ \Hf_{1,2}= \{e,c\}$ and $\Hf_{2,2}=\{e,b\}$. The matrix $W_2$ from Lemma~\ref{primitivepolynomial_lemma_over_F2} is given by
	\[
	W_2=\left(\begin{matrix}1&1&1&0&0&0\\
	0&0&1&1&1&0\\
	0&1&0&1&0&1\\
	1&0&0&0&1&1\\
	\end{matrix}\right);
	\]
A direct calculation shows that if we let
	\[
	T_2=\left(\begin{matrix}
1/3&-1/6&-1/6&1/3\\
1/3&-1/6&1/3&-1/6\\
	1/3&1/3&-1/6&-1/6\\
	-1/6&1/3&1/3&-1/6\\
	-1/6&1/3&-1/6&1/3\\
-1/6&-1/6&1/3&1/3\\
	\end{matrix}\right),
	\]
then $W_2T_2=I_4$, showing that $W_2$ has full rank equal to $4$. Alternatively, we can apply Lemma~\ref{primitivepolynomial_lemma_over_F2} with $q=2^{n-1}=2$.
\end{example}	
	
	\subsection{A general form of the inclusion matrix }\label{subsection: general form of inclusion matrix}

	We give here the  calculations of the rank of the inclusion matrix by  proving an extended result that applies to the matrix in Lemma~\ref{primitivepolynomial_lemma_over_F2}, besides additional matrices of similar form. Indeed, Lemma~\ref{primitivepolynomial_lemma_over_F2} follows from the next result by choosing $q=2^{n-1}$ and $k= 2^n-1$  for a fixed  integer $n\in\mathbb N^+.$
	
	\begin{lemma}\label{lem:full rank general}
		Let $q$ be a positive integer and let $k=2q-1.$
		Define a $(0,1)$-matrix $W=[W_{i,j}]$  of size $2q\times 2k$  by the prescription:
		\begin{enumerate}
			\item[(R1)]\label{eq:first row of W} $W_{1,j}=1,W_{1,j+k}=0 \text{ for all }1\leq j\leq k$.
			\item[(R2)]\label{eq:first row of W} There exist $j_1,\dots, j_{q-1}\in \{1,\dots, k\}$ so that
			\[
			W_{2,j}=\begin{cases}1&\text{ if }j=j_1,\dots, j_{q-1}\\0&\text{ if } j\in \{1,\dots, k\}\setminus \{j_1,\dots, j_{q-1}\}.\end{cases}
			\]
			\item[(R3)] For all $2\leq i\leq 2q,$ $0\leq \ell\leq 2q-i$ and $1\leq j\leq k$ we have
			\[W_{i+\ell,j  (\text{mod}\;k)}=W_{i,j+\ell (\text{mod}\;k)}.\]
			\item[(R4)]\label{eq:arbitrary row of W} For all $2\leq i\leq 2q$ and $1\leq j\leq k$ we have
			\[
			W_{i,j+k}=W_{i,j}+1\mod{2}.
			\]
			\item[(R5)] Let $0\;\leq\;r\;\leq k-1.$  Then  setting $K_r=\{j_1-r\;\text{mod}\;k,\;j_2-r\;\text{mod}\;k,\;\dots,\; j_{q-1}-r\;\text{mod}\;k \},$ we have
			$$|K_{r_1}\cap K_{r_2}|\;=\;\frac{q}{2}-1,\;\forall\;0\;\leq r_1<r_2\;\leq q-1.$$
		\end{enumerate}
		Then $W$ has rank $2q$.
	\end{lemma}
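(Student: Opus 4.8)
The plan is to exhibit explicitly a right inverse for $W$, modeled on the matrix $T_2$ displayed in Example~\ref{ex:Grigo}, which will simultaneously show $\operatorname{rank}(W)=2q$. First I would set up notation: by (R3) the first $k$ columns of $W$ are the cyclic shifts of a single $(0,1)$-vector $v\in\{0,1\}^{k}$ of weight $q-1$ (namely the second row, read as a function of the column index, then cycled), laid out so that row $i$ restricted to the first $k$ columns is the shift of $v$ by $i-2$ positions mod $k$; the first row is the all-ones vector $\mathbf 1_k$; and by (R4) the last $k$ columns are the complementary vectors $\mathbf 1_k-({\rm column})$. Condition (R5) says precisely that any two distinct cyclic shifts of $v$ (among the first $q-1$ of them, hence among all of them, by (R3) and translation invariance) overlap in exactly $q/2-1$ positions — this is the $2$-design condition, and it is what forces $q$ to be even.

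The key computation is with the $k\times k$ circulant matrix $C$ whose rows are the cyclic shifts of $v$. By (R5) together with the fact that $v$ has weight $q-1$, the matrix $CC^{\mathsf T}$ equals $(q-1)I_k + (q/2-1)(J_k - I_k) = (q/2)I_k + (q/2-1)J_k$, where $J_k$ is the all-ones $k\times k$ matrix. Using $J_k^2 = kJ_k$ and $k=2q-1$, one checks that this is invertible over $\Q$ (its eigenvalues are $q/2$ on the hyperplane orthogonal to $\mathbf 1$ and $q/2+(q/2-1)(2q-1)=q^2-q/2-1 \neq 0$ on $\mathbf 1$), so $C$ is invertible. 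Next I would understand how the all-ones row and the complementary columns interact: since $v$ has weight $q-1 = (k-1)/2$, we have $C\mathbf 1_k = (q-1)\mathbf 1_k$ and $\mathbf 1_k^{\mathsf T}C = (q-1)\mathbf 1_k^{\mathsf T}$, so $\mathbf 1_k$ is an eigenvector of $C$; this lets me decompose $\Q^k$ as $\Q\mathbf 1_k \oplus \mathbf 1_k^{\perp}$ and handle the "top" row and the complemented half of the columns separately. Concretely, write $W$ in block form with rows split as $\{1\}\sqcup\{2,\dots,2q\}$ and columns split as (first $k$) $\sqcup$ (last $k$): the top-left block is $\mathbf 1_k^{\mathsf T}$, the top-right is $\mathbf 0_k^{\mathsf T}$, the bottom-left is $C'$ (the $(2q-1)\times k$ matrix of all cyclic shifts of $v$, which has rank $k$ since it contains $C$ as a submatrix — note $2q-1=k$, so $C'=C$ exactly), and the bottom-right is $J-C$.

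With that structure the proof becomes linear algebra: I claim the row space of $W$ is all of $\Q^{2q}$ — equivalently $W$ has a right inverse. Since the bottom block $(C \mid J-C)$ already has the invertible $C$ in its left half, its $k=2q-1$ rows are independent; it remains to see that the first row $(\mathbf 1_k \mid \mathbf 0_k)$ is not in their span. If it were, some rational combination $y^{\mathsf T}C = \mathbf 1_k^{\mathsf T}$ and $y^{\mathsf T}(J-C)=\mathbf 0_k^{\mathsf T}$ would hold; adding these gives $y^{\mathsf T}J = \mathbf 1_k^{\mathsf T}$, i.e. $(\sum_i y_i)\mathbf 1_k^{\mathsf T} = \mathbf 1_k^{\mathsf T}$, so $\sum_i y_i = 1$; but from $y^{\mathsf T}C=\mathbf 1_k^{\mathsf T}$ and $C\mathbf 1_k=(q-1)\mathbf 1_k$ we get $(q-1)\sum_i y_i = y^{\mathsf T}C\mathbf 1_k = \mathbf 1_k^{\mathsf T}\mathbf 1_k = k = 2q-1$, forcing $q-1 = 2q-1$, i.e. $q=0$, a contradiction. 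Hence all $2q$ rows of $W$ are independent and $\operatorname{rank}(W)=2q$. The main obstacle I anticipate is bookkeeping: getting the cyclic-shift indexing in (R3) to line up so that the bottom block is genuinely the circulant $C$ (rather than $C$ transposed or shifted), and confirming that $q$ must be even so that $q/2-1$ is an integer and the eigenvalue $q/2$ is a nonzero rational — both of which are guaranteed by (R5) but should be stated carefully. I would present the argument via the explicit decomposition above rather than writing down a closed form for $T$, since the eigenvector structure of the circulant makes invertibility transparent.
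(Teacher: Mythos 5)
Your argument is correct, but it takes a genuinely different route from the paper. The paper writes down an explicit right inverse $T$ whose entries take only the two values $\tfrac{1}{k}$ and $-\tfrac{q-1}{k(k-q+1)}$ (generalizing the matrix $T_2$ of Example~\ref{ex:Grigo}) and verifies $WT=I_{2q}$ entry by entry, the key input being the intersection counts $|F_{i,l}|=\tfrac{q}{2}-1$ and $|G_{i,l}|=|H_{i,l}|=|I_{i,l}|=\tfrac{q}{2}$ extracted from (R5)--(R9). You instead block-decompose $W$, identify the bottom-left block as the circulant $C$, compute $CC^{\mathsf T}=\tfrac{q}{2}I_k+(\tfrac{q}{2}-1)J_k$ to get invertibility of $C$, and then use a row-sum argument to show the all-ones first row is independent of the rest; this is essentially the classical Fisher-type argument for incidence matrices of $2$-designs, i.e.\ the proof the paper only gestures at in Remark~\ref{design-Nat} via Assmus--Key. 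Your route is arguably cleaner and makes the role of the design condition more transparent, at the cost of not producing the explicit $T_n$ that the paper reuses quantitatively in the proof of Lemma~\ref{CEPSSLemma5.31} (where the bound $\sum_j|(T_n^t)_{1,j}|$ matters, not just invertibility). Two small points: your eigenvalue of $CC^{\mathsf T}$ on $\mathbf 1$ should be $\tfrac{q}{2}+(\tfrac{q}{2}-1)(2q-1)=(q-1)^2$, not $q^2-\tfrac{q}{2}-1$ (harmless, since both are nonzero for $q\geq 2$); and the extension of (R5) from pairs with $0\leq r_1<r_2\leq q-1$ to all distinct pairs in $\{0,\dots,k-1\}$ does need the translation invariance $|K_{r_1}\cap K_{r_2}|=|K_0\cap K_{r_2-r_1}|$ together with the symmetry $d\leftrightarrow k-d$, which you flag only in passing --- though the paper's own proof uses the same extension silently when it asserts $|F_{i,l}|=|K_{i-1}\cap K_{l-1}|=\tfrac{q}{2}-1$ for all $2\leq i\neq l\leq 2q$.
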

	
	Note that (R1)-(R5) imply that
	\begin{enumerate}
		\item[(R6)]For row 1, the matrix $W$ had exactly $k$ entries in the first $k$ columns that are equal to $1,$  and in every subsequent row $i=2,\dots, 2q$, the matrix $W$ has exactly $q-1$ entries in the first $k$ columns which are equal to $1$, i.e. $|\{j\mid 1\leq j\leq k, W_{i,j}=1\}|=q-1.$
		\item[(R7)] In the first row, there are $0$ entries in the second $k$ columns that are equal to $1,$ and in every subsequent row $i=2,\dots, 2q$, there are exactly $k-(q-1)=(2q-1)-q+1=q$ entries in the second $k$ columns which are equal to $1$, i.e. $|\{j\mid k+1\leq j\leq 2k, W_{i,j}=1\}|=q.$
		\item[(R8)]In every row $i=1,\dots, 2q$, there are exactly $q-1+k-(q-1)=k=2q-1$ entries in all the $2k$ columns which are equal to $1$, i.e. $|\{j\mid 1\leq j\leq 2k, W_{i,j}=1\}|=k.$
		\item[(R9)] In every column $j=1,\dots, 2k$, there are $k-q+1=q$ entries which are equal to $1$, i.e. if $j$ is fixed, $|\{i\mid 1\leq i\leq 2q, W_{i,j}=1\}|=q.$  Note that this is true because for fixed $r$ with $0\leq r\leq k-1,\;W_{1,1+r}=1$ by construction, and for $2\leq i\leq 2q,\;W_{i,1+r}=1$ if and only if $i\in K_{r}.$  So for fixed $j$ with $1\leq j\leq k,\;|\{1\leq i\leq 2q:\;W_{i,j}=1\}|=1+(q-1)=q.$   For $j\in \{k+1,k+2,\dots, 2k\},$ by construction, $W_{1,j}=0,$ and by mirroring, for $2\leq i\leq 2q,$  $W_{i,j}=1$ if and only if $i\in \{1,2,\dots,k\}\backslash K_{j-(k+1)}.$ So for fixed $j$ with $k+1\leq j\leq 2k$ we have
\[
\;|\{1\leq i\leq  2q:\;W_{i,j}=1\}|=|\{1,2,\dots,k\}\backslash K_{j-(k+1)}|=k-(q-1)=q.
\]   It then follows that for $1\leq j\leq k,$ the $j^{\text{th}}$ column has exactly $q-1$ entries equal to $1$ in rows $2$ through $2q$  (this is because all of the entries in row $1$ in columns $j=1$ through $k$ are equal to $1$).
		
	\end{enumerate}
	
	We remark that examples of matrices $W$ as in the lemma will arise from the simplicity analysis of Steinberg algebras associated to the multispinal groups from \cite{Stei-Sza-Contr}, see
section~\ref{section:simplicity}.

	\begin{proof}
		After we complete our direct proof, we will remark on how the result also follows from combinatorial design theory.
		
		To prove $W$ has full rank, it suffices to find a $2k\times 2q$ matrix $T$ so that $WT=I$, the identity of size $2q\times 2q$. We define $T$ as follows:
		\[
		T_{j,i}=\begin{cases}\frac {1}{k},&\text{ if } W_{i,j}=1\\
		-\frac {q-1}{k(k-q+1)},&\text{ if } W_{i,j}=0,\end{cases}
		\]
		and
		\[
		T_{j+k,i}=\begin{cases}\frac {1}{k},&\text{ if } W_{i,j+k}=1\\
		-\frac {q-1}{k(k-q+1)},&\text{ if } W_{i,j+k}=0,\end{cases}
		\]
		for all $1\leq i\leq 2q, 1\leq j\leq k$. By our hypotheses, for each fixed $j\in \{1,\dots, k\}$ we have the following:
		\begin{enumerate}
			\item

			For every $1\leq i\leq 2q$, there are $k=2q-1$ columns with label $\{j_{i,1}<j_{i,2}<\cdots j_{i,k}\}$ where $W_{i,j_{i,\ell}}=1$, corresponding to row entries \lq \lq $j_{i,\ell},i$" in $T$ so that $T_{j_{i,\ell},i}=1/(2q-1)$, and there are $k-q+1=q$ columns with label \lq \lq $j$" where $W_{i,j}=0$, leading to rows in $T$ so that  $T_{j,i}=-(q-1)[(2q-1)(q)]^{-1}$.
			\item For every $1\leq i\leq 2q$, there are $k-q+1=q$ columns with label $``j+k"$ where $W_{i,j+k}=W_{i,j}+1 (\text{mod}\;2) =1$, corresponding to rows $``j+k"$ in $T$ where $T_{j+k,i}=1/k$, and there are $q-1$ columns with label $``j+k"$ where $W_{i,j+k}=W_{i,j}+ 1 (\text{mod}\;2)=0$, leading to corresponding rows in $T$ where $T_{j+k,i}=-(q-1)[k(k-q+1)]^{-1}$.
		\end{enumerate}
		We then have
		\begin{align*}
			(WT)_{i,i}
			&=\sum_{j=1}^{k} W_{i,j}T_{j,i}+\sum_{j=1}^k W_{i,j+k}T_{j+k,i}\\
			&=(q-1)\cdot 1\cdot \frac 1k +(k-q+1)\cdot 0\cdot (-\frac {q-1}{k(k-q+1)})\\
			&+(q-1)\cdot 0\cdot (-\frac {q-1}{k(k-q+1)}) +(k-q+1)\cdot 1\cdot \frac 1k=1\\
		\end{align*}
		for all $1\leq i\leq 2q$.
		
		For the purposes of the next calculation, we denote
\[
a=\frac 1k \text{ and }b=-\frac{q-1}{k(k-q+1)}=-\frac{q-1}{(2q-1)\cdot q}.
\]
Let $i,l\in \{1,\dots, 2q\}$ with $i\neq l$. Then
		\begin{align*}
			(WT)_{i,l}
			&=\sum_{\{j\mid 1\leq j\leq k, W_{i,j}=1\}}T_{j,l}+\sum_{\{j\mid 1\leq j\leq k, W_{i,j+k}=1\}}T_{j+k,l}\\
			&=\sum_{\{j\mid 1\leq j\leq k, W_{i,j}=1, T_{j,l}=a\}}a+\sum_{\{j\mid 1\leq j\leq k, W_{i,j}=1, T_{j,l}=b\}}b\\
			&+\sum_{\{j\mid 1\leq j\leq k, W_{i,j+k}=1, T_{j+k,l}=a\}}a+\sum_{\{j\mid 1\leq j\leq k, W_{i,j+k}=1, T_{j+k,l}=b\}}b\\
			&=\sum_{\{j\mid 1\leq j\leq k, W_{i,j}=1, T_{j,l}=a\}}a+0+0+\sum_{\{j\mid 1\leq j\leq k, W_{i,j}=1, T_{j,l}=b\}}b\\
		\end{align*}

		There are now three cases to consider:  we first consider $i=1<l\leq 2q.$
		Then
		\begin{align*}
			(WT)_{1,l}
			&=\sum_{\{j\mid 1\leq j\leq k\}}T_{j,l}\\
			&=\sum_{\{j\mid 1\leq j\leq k, T_{j,l}=a\}}a+\sum_{\{j\mid 1\leq j\leq k,  T_{j,l}=b\}}b\\
			&=(q-1)\cdot a+ q \cdot b\\
			&=(q-1)\cdot \frac 1k + 	q \cdot (-\frac {q-1}{k(k-q+1)})=0.
		\end{align*}
		So $(WT)_{1,l}=0,\;\;2\leq\;l\;\leq 2q.$

		We now consider the case where $i,l\in \{2,\cdots, 2q\},\;i\not= l.$
		Before proceeding, we define four sets that are of interest in our calculations, and make some remarks about their cardinalities.  For $i, l\in \{2,3,\cdots,2q \}$ with $i\not= l,$ let
		$$F_{i,l} = \{j: 1\leq j\leq k:  W_{i,j}=1 \;\text{and}\; W_{l,j}=1\},\;\;
		G_{i,l}=\{j: 1\leq j\leq k:  W_{i,j}=1 \;\text{and}\; W_{l,j}=0\},$$
		$$H_{i,l}=\{j: 1\leq j\leq k:  W_{i,j}=0 \;\text{and}\; W_{l,j}=0\},\;\text{and}\;
		I_{i,l}= \{j: 1\leq j \leq k:  W_{i,j}=0 \;\text{and}\; W_{l,j}=1\}.$$
		We have, for $2\leq i \not= l\leq 2q,$ using $(R7)$ and $(R9),$
		$$|F_{i,l}|+|G_{i,l}|=(q-1),\;|H_{i,l}|+|I_{i,l}|= [k-(q-1)]=q,$$
		$$|F_{i,l}|+|I_{i,l}|=(q-1),\;\text{and}\;|G_{i,l}|+|H_{i,l}|=k-(q-1)]=q.$$
		From two of the equations, we see that $|G_{i,l}|=|I_{i,l}|.$
		From Condition (R5), we see that $|F_{i,l}|= |K_{i-1}\cap K_{l-1}|=\frac{q}{2}-1.$  It follows from our cardinality equations again that $|G_{i,l}|=|I_{i,l}|=|H_{i,l}|=\frac{q}{2}.$
		
		Then:
		\begin{align*}
			(WT)_{i,l}
			&=\sum_{\{j\mid 1\leq j\leq k, W_{i,j}=1\}}T_{j,l}+\sum_{\{j\mid 1\leq j\leq k, W_{i,j+k}=1\}}T_{j+k,l}\\
			&=\sum_{\{j\mid 1\leq j\leq k, W_{i,j}=1, W_{l,j}=1\}}a+\sum_{\{j\mid 1\leq j\leq k, W_{i,j}=1, W_{l,j}=0\}}b\\
			&+\sum_{\{j\mid 1\leq j\leq k, W_{i,j}=0, W_{l,j}=0\}}a+\sum_{\{j\mid 1\leq j\leq k, W_{i,j}=0, W_{l,j}=1\}}b\\
			&=|F_{i,l}|\cdot a+|I_{i,l}|\cdot b+|H_{i,l}|\cdot a+ |G_{i,l}|\cdot b\\
			&=(\frac{q}{2}-1)a+ (\frac{q}{2}) b +  (\frac{q}{2}) a+   (\frac{q}{2}) b\\
			&= (q-1)\cdot a + q\cdot b \\
			&=(q-1)\cdot \frac 1k + 	q \cdot (-\frac {q-1}{k(k-q+1)})=0.
		\end{align*}
		So $(WT)_{i,l}=0,$ for  $i,l\in \{2,\cdots, 2q\},\;i\not= l.$

		Finally, for $2\leq i\leq 2q,$ and $l=1$ we have:
		\begin{align*}
			(WT)_{i,1}
			&=\sum_{\{j\mid 1\leq j\leq k\}}W_{i,j}T_{j,1}+\sum_{\{j\mid 1\leq j\leq k\}}W_{i,j+k}T_{j+k,1}\\
			&=\sum_{\{j\mid 1\leq j\leq k\}}W_{i,j}\cdot a+\sum_{\{j\mid 1\leq j\leq k\}}W_{i,j+k}\cdot b\\
			&= (q-1)\cdot a + q\cdot b \\
			&=(q-1)\cdot \frac 1k + 	q \cdot (-\frac{q-1}{k(k-q+1)})=0.
		\end{align*}
		
		These are all the results  we need to conclude $WT=I,$ which finishes the proof of the Lemma.
	\end{proof}

	\begin{remark}
		\label{design-Nat}
		We note that if we set $A$ to be the $(2q-1)\times k\;=\;(2q-1)\times (2q-1)$ square matrix whose  $(2q-1)$ rows are exactly the $2^{\text{nd}}$ through $2q^{\text{th}}$ rows of $W,$ and whose  $(2q-1)=k$ columns are the first $k$ columns of $W,$ then a little linear algebra shows that $W$ has full rank $2q$ if and only if $A$ has full rank $2q-1=k.$  But Hypotheses (R1) through (R5) and deductions (R6) through (R9) show that $A$ is the incidence matrix of a $2-(k=2q-1, q-1, \frac{q}{2}-1)$-design in the sense of Assmus and Key \cite{Assmus}, Definition 1.2.1, with underlying set ${\mathcal P}\;=\;\{1,2,\dots,2q-1=k\}.$  We can then deduce that the matrix $A$ has full rank $2q-1=k$ from \cite{Assmus}, Theorem 1.4.1.
		\end{remark}

	\section{$\Z_2$-Multispinal groupoids and simplicity of the Steinberg algebra}\label{section:simplicity}
	
	We now consider the groupoids $\G_n$ associated to the
	$\Z_2$-multispinal groups $\mathfrak{G}_n$ of Section \ref{def:multispinal}. The construction of these groupoids  is
	described in the general case of self-similar actions
	in Section \ref{sec:back-Gpds}. The goal of this section is to show that the Steinberg algebra $\AA_\mathbb{K}(\mathcal{G}_n)$ of $\mathcal{G}_n$ over a field $\mathbb{K}$ of characteristic zero is simple.

	We will make use of a family of compact open bisections $U_{m}(z_g)$ (denoted by $U_{g,m}$ in \cite{CEPSS}) where the points $z_g\in \mathcal{G}_n$,  for $g\in \mathfrak{G}_n$, are defined by:
	\[
	z_g := [(\emptyset, g, \emptyset), \bfo^\infty ],
	\]
	see also \cite[Section 5]{CEPSS}. Explicitly $U_{m}(z_g)$  is a neighborhood of $z_g$ in $\mathcal{G}_n$, where we recall that we use $\Theta$ to denote bisections, and is defined as
	\[
U_{m}(z_g):=\Theta\bigl( (\varnothing,g,\varnothing), C(\bfo^m)\bigr).
	\]
		 By construction, $U_\ell(z_g)\subseteq U_m(z_g)$ for $\ell\geq m$. We will use both notations, $U_{m}(z_g)$ and $U_{g,m}$, depending on whether we highlight the connection with elements in $\Gf_n$ or points in $\mathcal{G}_n$.
Note that $\mathcal{G}_n$ is countably ample with the choice of basis around points $z_g, g\in \mathcal{N}_{0,n}$ given by $\{U_m(z_g)\}_{m\in \N}$.

	The following lemma details the structure of the support sets of
	bisections in terms of the sets $U_\ell(z_g)$.

	\begin{lemma}\label{lem:primitive polynomial over Z two} Let $f_n(x)$ be a primitive polynomial of degree $n\geq 2$ over $\Z_2$,  identify $\Z_2[x]/\langle f_n(x)\rangle$ with the field $\F_{2^n}$, and let $\alpha$ be a zero of $f_n(x)$ and a generator of the cyclic group $\F_{2^n}^*$ on $2^n-1$ elements. Form the $\Z_2$-multispinal group $\mathfrak{G}_n$ corresponding to $\Z_2$ and $\F_{2^n}$ with its automorphism implemented by multiplication by $\alpha$. Let $m\geq 1$ and set $\mathcal{F}_m:=\{U_m(z_g)\mid g\in \mathcal{N}_{0,n}\}$, where we recall that $\mathcal{N}=\mathcal{N}_{0,n}\sqcup\{a\}$ denotes the nucleus of $\mathfrak{G}_n$. With notation as in \eqref{eq: frakHj}, let $K\in \{\Hf_{j,n}, \Hf_{j,n}^c\mid j=0,\dots,2^n-2\}$, and let $\JJ=\{U_m(z_g)\mid g\in K\}\subset \mathcal{F}_m$. Then with notation as in  Equation (\ref{eq:M disjointified})
 we have the following structure result for the compact open bisections $M_\JJ$:
		\[
		M_\JJ=\bigcap_{g\in K} U_m(z_g).
		\]
	\end{lemma}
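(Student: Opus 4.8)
The plan is to unravel the definition of $M_\JJ$ from \eqref{eq:M disjointified} and show that the ``subtract off the sets not in $\JJ$'' part contributes nothing. Recall that with $\FF=\FF_m=\{U_m(z_g)\mid g\in\NN_{0,n}\}$ and $\JJ=\{U_m(z_g)\mid g\in K\}$, we have by definition
\[
M_\JJ=\Big(\bigcap_{g\in K}U_m(z_g)\Big)\setminus\Big(\bigcup_{g\in\NN_{0,n}\setminus K}U_m(z_g)\Big),
\]
so the claimed equality $M_\JJ=\bigcap_{g\in K}U_m(z_g)$ is equivalent to the assertion that for $g\in K$ and $h\in\NN_{0,n}\setminus K$ the bisections $U_m(z_g)$ and $U_m(z_h)$ are disjoint, or at least that no point of $\bigcap_{g\in K}U_m(z_g)$ lies in $U_m(z_h)$. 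So the first step is to reduce the statement to a pairwise disjointness claim: $U_m(z_g)\cap U_m(z_h)=\emptyset$ whenever $g,h\in\NN_{0,n}$ with $g\neq h$ and neither equals $a$ (in fact we only need this for $g\in K$, $h\notin K$, but pairwise disjointness across all of $\NN_{0,n}\setminus\{a\}$ is cleaner and is what the structure of $K$ vs.\ its complement delivers for free).

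Next I would compute these bisections explicitly. By definition $U_m(z_g)=\Theta\bigl((\varnothing,g,\varnothing),C(\bfo^m)\bigr)$ is the set of germs $[(\varnothing,g,\varnothing),\xi]$ with $\xi\in C(\bfo^m)$, i.e.\ $\xi=\bfo^m\omega$ for $\omega\in X^\omega$; its range is $C(g\cdot\bfo^m)=C(\bfo^m)$ (since every $\beta\in\F_{2^n}$ fixes the letters $\bfz,\bfo$, and $g=\iota_n(x)$ for some $x$, so $g\cdot\bfo^m=\bfo^m$) and its source is also $C(\bfo^m)$. So a point of $U_m(z_g)\cap U_m(z_h)$ would be a germ $\gamma$ with $s(\gamma)\in C(\bfo^m)$ represented both by $(\varnothing,g,\varnothing)$ and by $(\varnothing,h,\varnothing)$ acting near some $\xi=\bfo^m\omega$. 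Two elements of the inverse semigroup $S_{G,X}$ determine the same germ at $\xi$ precisely when they act the same way on a neighbourhood of $\xi$, which here forces $g$ and $h$ to agree on $\bfo^k$ for all large $k$ together with matching restrictions; concretely $g|_{\bfo^k}=h|_{\bfo^k}$ eventually. Using \eqref{eq:restriction to one} and the iterate formula $g|_{\bfo^j}=\iota_n(\varphi_n^j(x))$, agreement of restrictions says $\varphi_n^k(x)=\varphi_n^k(y)$ for large $k$ where $g=\iota_n(x)$, $h=\iota_n(y)$; since $\varphi_n$ is an automorphism (multiplication by the generator $\alpha$), this forces $x=y$, hence $g=h$, contradicting $g\neq h$. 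This is the heart of the argument and where I expect the only real subtlety: pinning down exactly when two elements $(\varnothing,g,\varnothing)$ and $(\varnothing,h,\varnothing)$ have the same germ at a point of $\bfo^\infty$'s cylinder, i.e.\ correctly translating the germ relation in $\mathcal{G}_n=\mathcal{G}_{\tight}(S_{G,X})$ into the restriction condition and then invoking injectivity of $\varphi_n$ (equivalently, the faithfulness of the action recorded in $\bigcap_j\ker(\Tr\circ\varphi_n^j)=\{0\}$). One must be a little careful that the germ equivalence is an \emph{eventual} agreement and that the shrinking cylinders $C(\bfo^m)$ do capture it; the nesting $U_\ell(z_g)\subseteq U_m(z_g)$ for $\ell\ge m$ and the fact that $\{U_m(z_g)\}_m$ is a neighbourhood basis at $z_g$ are what make this work.

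Having established pairwise disjointness of the $U_m(z_g)$ for distinct $g\in\NN_{0,n}$, the lemma follows immediately: in the formula for $M_\JJ$ every set $U_m(z_h)$ with $h\in\NN_{0,n}\setminus K$ is disjoint from each $U_m(z_g)$ with $g\in K$, hence disjoint from $\bigcap_{g\in K}U_m(z_g)$, so $\bigcup_{h\notin K}U_m(z_h)$ meets $\bigcap_{g\in K}U_m(z_g)$ in the empty set and subtracting it changes nothing. I would also note explicitly that $K$ and the other sets in $\{\Hf_{j,n},\Hf_{j,n}^c\}$ are subsets of $\iota_n(\F_{2^n})\subseteq\NN_{0,n}$ (they never contain $a$), so $\JJ$ really is a subfamily of $\FF_m$ and the disjointification from \eqref{eq:M disjointified} applies verbatim. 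A final sentence would record that $M_\JJ=\bigcap_{g\in K}U_m(z_g)$ is then a finite intersection of compact open bisections, hence itself a compact open bisection, which is the ``structure result'' the lemma advertises and which will be used in the subsequent simplicity analysis together with the full-rank matrix $W_n$ of \lemref{primitivepolynomial_lemma_over_F2}.
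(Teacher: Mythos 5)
There is a genuine error at the heart of your argument: the pairwise disjointness claim $U_m(z_g)\cap U_m(z_h)=\emptyset$ for distinct $g,h\in\NN_{0,n}$ is \emph{false}, and the paper's proof in fact opens by establishing the opposite. The flaw is in your translation of the germ relation. You test whether $(\varnothing,g,\varnothing)$ and $(\varnothing,h,\varnothing)$ have the same germ only along the all-ones tail, concluding that agreement forces $g|_{\bfo^k}=h|_{\bfo^k}$ eventually and hence $g=h$ by injectivity of $\varphi_n$. But a point of $U_m(z_g)\cap U_m(z_h)$ is a germ at an arbitrary $\xi\in C(\bfo^m)$, and two such triples have the same germ at $\xi$ iff they agree after multiplying by an idempotent $(\nu,1,\nu)$ for \emph{some} finite prefix $\nu$ of $\xi$ --- and $\nu$ may contain the letter $\bfz$. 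Writing $g=\iota_n(x_1)$, $h=\iota_n(x_2)$, the restriction rule $\beta|_{\bfz}=\Tr(\beta)$ shows that if $x_1-x_2\in H_{j,n}=\ker(\Tr\circ\varphi_n^j)$ and $m+m'\equiv j\bmod(2^n-1)$, then $(gh^{-1})|_{\bfo^{m+m'}\bfz}=e$, so the two triples have the same germ at every point of $C(\bfo^{m+m'}\bfz)\subseteq C(\bfo^m)$. Since any nonzero $x_1-x_2$ lies in $2^{n-1}-1$ of the subgroups $H_{j,n}$, the intersection $U_m(z_g)\cap U_m(z_h)$ is always nonempty. Your disjointness claim would also make $\bigcap_{g\in K}U_m(z_g)$ empty for $|K|\geq 2$, rendering the lemma vacuous and destroying the downstream application in Lemma~\ref{lem:full rank}, which needs $M_\JJ$ to have nonempty interior in order to extract the linear system $\sum_{g\in K}c_g=0$.

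The correct mechanism, which your approach misses, is the interplay between the subgroup structure of $K$ and the position of the first $\bfz$. A germ lies in $\bigcap_{g\in K}U_m(z_g)$ only if it sits over a point of the form $\bfo^{(2^n-1)q+j}\bfz\mu'$, because all pairwise differences of elements of $\iota_n^{-1}(K)$ lie in the single subgroup $H_{j,n}$ (this holds both for $K=\Hf_{j,n}$ and for $K=\Hf_{j,n}^c$, since a difference of two elements outside an index-two subgroup lies inside it). For $h$ in the complementary set one has $(\Tr\circ\varphi_n^j)(\iota_n^{-1}(gh^{-1}))=1$, so $(gh^{-1})|_{\bfo^{(2^n-1)q+j}\bfz}\neq e$ and the germ of $(\varnothing,h,\varnothing)$ at such a point differs from that of $(\varnothing,g,\varnothing)$; this is what makes $\bigcap_{g\in K}U_m(z_g)$ disjoint from $\bigcup_{h\in K^c}U_m(z_h)$ and yields $M_\JJ=\bigcap_{g\in K}U_m(z_g)$, nonempty. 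Your reduction to the identity $M_\JJ=\bigl(\bigcap_{g\in K}U_m(z_g)\bigr)\setminus\bigl(\bigcup_{h\notin K}U_m(z_h)\bigr)$ and the observation that it suffices to show the subtracted union misses the intersection are both correct; it is the justification of that last step that must go through the trace kernels rather than through a (false) pairwise disjointness.
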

	
	\begin{proof}
Let $g_1,g_2$ be distinct in $\mathcal{N}_{0,n}$. We claim that $U_m(z_{g_1})\cap U_m(z_{g_2})$  is non-empty. To  have  $z\in U_m(z_{g_1})\cap U_m(z_{g_2})$ requires that we can find $w\in C(\bfo^m)$ so that $
		z=[(\varnothing,g_1,\varnothing),w]=[(\varnothing,g_2,\varnothing),w].$ Thus, we need \[
		(\varnothing,g_1,\varnothing)(\bfo^m\mu,e,\bfo^m\mu)=(\varnothing,g_2,\varnothing)(\bfo^m\mu,e,\bfo^m\mu)
		\]for some $\mu\in X^*$ (where $X=\{ \bfz,\bfo\}$) or, equivalently,
		\begin{align}
		g_1\vert_{\bfo^m}\cdot\mu=g_2\vert_{\bfo^m}\cdot\mu &\text{ and }(g_1g_2^{-1})\vert_{\bfo^m\mu}=e.
		\label{eq:nonempty intersection of two}
		\end{align}
		Denote $x_1:=\iota_n^{-1}(g_1)$, $x_2:=\iota_n^{-1}(g_2)$ in $\F_{2^n}$, where $x_1\neq x_2$.  Let $0\leq j<2^n-2$ be such that $x_1-x_2\in H_{j,n}$ (by (R6) there are $q-1=2^{n-1}-1$ such indices). Then, by the definition of  restrictions in \eqref{eq:restriction to one} we obtain
		\[
		\bigl((g_1g_2^{-1})\vert_{\bfo^j}\bigr)\vert_\bfz=e.
		\]
		Now let $m'\in \N$ be such that $m+m'\equiv j\mod(2^n-1)$. Then the identities in \eqref{eq:nonempty intersection of two} are satisfied
with $\mu:=\bfo^{m'}\bfz$, so $U_m(z_{g_1})\cap U_m(z_{g_2})$ is non-empty and consists of classes
		\begin{equation}\label{eq:m+m'}
		\{[(\varnothing,g_i,\varnothing),\bfo^{m+m'}\bfz  w']\mid w'\in X^\omega,m+m'\equiv j\mod(2^n-1)\},
		\end{equation}
		with $i=1,2$.
		
		Now fix $0\leq j<2^n-2$, let $K=\Hf_{j,n}$ and $\JJ=\{U_m(z_g)\mid g\in K\}\subset \mathcal{F}_m$.  With the notation of \eqref{eq:M disjointified}, consider
		\[
		M_\JJ=\bigcap_{g\in K} U_m(z_g)\setminus\bigcup_{h\in K^c}U_m(z_h).
		\]
		To prove the assertion of the lemma for this choice of $K$ it suffices to establish that
		\begin{equation}\label{eq:K intersect but not with K complement}
		\bigcap_{g\in K} U_m(z_g)\cap U_m(z_h)=\emptyset \text{ for all }h\in K^c.
		\end{equation}
		Note that $g_1g_2^{-1}\in \Hf_{j,n}$ for every pair $g_1,g_2\in K$, and so the set $\bigcap_{g\in K} U_m(z_g)$ is nonempty since for some  $g\in K$ it consists of the classes
		\[
		[(\varnothing, g,\varnothing),\bfo^{(2^n-1)q+j}\bfz\mu'], q\in \N\text{ so that }(2^n-1)q+j\geq m,\mu'\in X^\omega.
		\]
		Let $h\in K^c$. Then  $(\Tr\circ \varphi_n^j)(\iota_n^{-1}(h))=1$, thus $gh^{-1}\notin \Hf_{j,n}$ for all $g\in K$ and so no extension $\bfo^{m'}\bfo^m$ of $\bfo^m$ can start with $\bfo^j$.  In particular, no classes as in equation \eqref{eq:m+m'} exist for $h\in K^c$,  which implies our claim in \eqref{eq:K intersect but not with K complement}.
		
		A similar argument proves the assertion of the lemma for $K=\Hf_{j,n}^c$:
		first note that $\bigcap_{g\in K} U_m(z_g)$ is nonempty because it contains $[(\varnothing,g,\varnothing),\bfo^{(2^n-1)q+j}\bfz\mu']$ for $q\in \N$ with $(2^n-1)q+j\geq m$ and $\mu'\in X^\omega$ for some (and thus all) $g\in K$ (since $g_1g_2^{-1}\in \Hf_{j,n}$ when  $g_1,g_2$ are distinct elements of $\Hf_{j,n}^c$). Moreover, if $h\in K^c=\Hf_{j,n}$, then $(\Tr\circ \varphi_n^j)(\iota_n^{-1}(h))=0$ and so no classes as in \eqref{eq:m+m'} exist for this choice of $h$ and all $g\in K$.
	\end{proof}

\subsection{Simplicity of the Steinberg algebra} To prove simplicity of the
Steinberg algebra of a $\Z_2$-multispinal groupoid over any field $\mathbb{K}$ of characteristic zero  will require first a characterization of singular functions. We will address this point next.
The following lemma is in fact the analogue of \cite[Lemma 5.25]{CEPSS} in the case of an arbitrary degree $n$ primitive polynomial over $\Z_2$ and it characterizes singular elements of the Steinberg algebra. By $1_U$ we denote the characteristic function of a compact open set $U\subset \mathcal{G}_n.$
	
	\begin{lemma}\label{lem:full rank} Let $\mathbb{K}$ be a field with $\chara(\K)=0$ and $\mathcal{G}_n$  the ample groupoid associated to the multispinal group $\mathfrak{G}_n$ from a primitive polynomial of degree $n\geq 2$ over $\Z_2$. Suppose that $f\in A_{\mathbb{K}}(\mathcal{G}_n)$ is such that
		\begin{equation}\label{eq:f as combination of Um from nucleus}
		f=\sum_{g\in \NN_{0,n}}c_g1_{U_m(z_g)} \text{ for }c_g\in \mathbb{K} \text{ and }m\in \N.
		\end{equation}
		Then $f$ belongs to the ideal of singular elements $S_\K(\mathcal{G}_n)$ precisely when it is the zero function.
	
	\end{lemma}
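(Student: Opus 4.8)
The plan is to analyze when $f = \sum_{g\in\NN_{0,n}} c_g 1_{U_m(z_g)}$ has support with empty interior, using the disjointification formula \eqref{eq:function disjointification} together with the structure result of \lemref{lem:primitive polynomial over Z two} and the full-rank statement of \lemref{primitivepolynomial_lemma_over_F2}. The ``if'' direction is trivial: the zero function is singular. For the ``only if'' direction, suppose $f$ is singular, i.e. $\supp(f)$ has empty interior. With $\FF_m = \{U_m(z_g)\mid g\in\NN_{0,n}\}$, I would disjointify as in \eqref{eq:function disjointification} over the subsets $\JJ\subseteq\FF_m$, where $f|_{M_\JJ} = c_\JJ := \sum_{g} c_g$ summed over those $g$ with $U_m(z_g)\in\JJ$. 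The first key observation is that the only $M_\JJ$ with nonempty interior are those of the shape dictated by \lemref{lem:primitive polynomial over Z two}, namely $\JJ = \{U_m(z_g)\mid g\in K\}$ for $K\in\{\Hf_{j,n},\Hf_{j,n}^c\mid j=0,\dots,2^n-2\}$; for these, $M_\JJ = \bigcap_{g\in K}U_m(z_g)$ is itself a compact open bisection and hence has nonempty interior. Every other $M_\JJ$ is relatively closed in an open bisection with empty interior, so those terms contribute nothing to whether $\supp(f)$ meets an open set.

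From the singularity hypothesis, then, the coefficient $c_\JJ$ must vanish on every $\JJ$ of the ``good'' form, since on each such $M_\JJ$ (which is open) we would otherwise have $f$ nonzero on a nonempty open set, contradicting that $\supp(f)$ has empty interior. This yields the system of linear equations
\[
\sum_{g\in \Hf_{j,n}} c_g = 0 \quad\text{and}\quad \sum_{g\in \Hf_{j,n}^c} c_g = 0,\qquad j=0,\dots,2^n-2,
\]
indexed by the subgroups and their complements. Re-indexing $\NN_{0,n}$ by elements $\vec x\in(\F_2)^n$ via $g = \iota_n(x) \leftrightarrow \vec x$ (with $e\leftrightarrow\vec 0$), this is precisely the system $W_n \vec c = 0$, where $W_n$ is the $2^n\times 2(2^n-1)$ inclusion matrix of \lemref{primitivepolynomial_lemma_over_F2} acting on the vector $\vec c = (c_g)_{g\in\NN_{0,n}}\in\K^{2^n}$ — note the matrix multiplies a length-$2^n$ vector on the right and produces a length-$2(2^n-1)$ vector, so one should transpose appropriately; what matters is that $W_n$ has rank $2^n$ over a field of characteristic zero by \lemref{primitivepolynomial_lemma_over_F2}, hence its kernel (as a map $\K^{2^n}\to\K^{2(2^n-1)}$ given by $\vec c\mapsto W_n^{\mathsf T}\vec c$, equivalently the nullspace of the transpose) is trivial. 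Therefore all $c_g = 0$ and $f$ is the zero function.

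The main obstacle, and the step requiring the most care, is the first one: establishing that among all the $M_\JJ$ only those singled out in \lemref{lem:primitive polynomial over Z two} have nonempty interior, and conversely that those do. The forward part of this is essentially the content of \lemref{lem:primitive polynomial over Z two} itself (any $M_\JJ$ not of that form is empty or has empty interior because the defining intersections-minus-unions force points lying in some $U_m(z_h)$ they were supposed to avoid, using the restriction computations $(g_1 g_2^{-1})|_{\bfo^j}|_{\bfz} = e$ exactly on the coset condition $x_1 - x_2\in H_{j,n}$). I would also need the auxiliary fact that a compact open bisection in this effective groupoid has nonempty interior — immediate, since it is itself open — and that relatively closed nowhere-dense subsets of open bisections cannot contribute interior to a finite union; this is where \proref{lem:slight-improv-Lemma-2.1} or, more directly, \cite[Proposition 3.6]{CEPSS} characterizing singular functions via empty-interior support does the bookkeeping. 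Once the reduction to the linear system is in place, invoking \lemref{primitivepolynomial_lemma_over_F2} closes the argument cleanly, and the characteristic-zero hypothesis enters only there (the rank computation uses the rational matrix $T$ with denominators involving $k$ and $k-q+1$).
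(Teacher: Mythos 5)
Your proposal is correct and follows essentially the same route as the paper: restrict to the sets $M_\JJ$ indexed by $K\in\{\Hf_{j,n},\Hf_{j,n}^c\}$, which by Lemma~\ref{lem:primitive polynomial over Z two} are nonempty open intersections, deduce $\sum_{g\in K}c_g=0$ from singularity, and conclude via the full rank of $W_n^{\mathsf T}$ from Lemma~\ref{primitivepolynomial_lemma_over_F2}. The only difference is that you additionally assert the remaining $M_\JJ$ have empty interior, which the paper neither claims nor needs, since vanishing of $f$ on the ``good'' $M_\JJ$ already forces all $c_g=0$.
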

	
	\begin{proof} Let $f$ be given as in \eqref{eq:f as combination of Um from nucleus}. Assume that $f\in S_\K(\mathcal{G}_n)$. By equation \eqref{eq:function disjointification}, the support of $f$ can be expressed as a disjoint union of sets $M_\JJ$ where $\JJ$ runs over the non-empty subsets of $\mathcal{F}_m=\{U_m(z_g)\mid g\in \mathcal{N}_{0,n}\}.$

We shall show that it suffices to restrict attention to the family of sets $M_\JJ$ corresponding to the choices $\JJ=\{U_m(z_g)\mid g\in K\}$ where $K$ is $\Hf_{j,n}$ or $\Hf_{j,n}^c$ for some $j$ with $0\leq j\leq 2^n-2$. Indeed, using only these sets in the disjointified decomposition of the support of $f$ we shall show that all $c_g$ where $g\in \NN_{0,n}$ vanish.

Let $K$ be  $\Hf_{j,n}$ or $\Hf_{j,n}^c$ for some $j$ with $0\leq j\leq 2^n-2$,  and let $\JJ=\{U_m(z_g)\mid g\in K\}$. By Lemma~\ref{lem:primitive polynomial over Z two} we know that
		\[
		M_\JJ=\bigcap_{g\in K} U_m(z_g),
		\]
		thus $M_\JJ$ has non-empty interior, being a non-empty finite intersection of open sets. The assumption on  $f$ implies that when evaluated at $x\in M_\JJ$, the function vanishes, so
		\[
		f\vert_{M_\JJ}(x)=\sum_{g\in K}c_g=0.
		\]
		Letting $K$ and $j$ vary, this gives rise to a system of $2k=2(2^{n}-1)$ equations
\[
\sum_{g\in K}c_g=0,
\]
with one pair of equations for each choice of $\Hf_{j,n}$ and $\Hf_{j,n}^c$ and with $2q=2\cdot 2^{n-1}$ unknowns, corresponding to $c_g$ where $g\in \mathcal{N}_{0,n}$. The underlying matrix $\tilde{W}_n$ of size $2(2^n-1)\times 2^n$ is the transpose of the matrix $W_n$ from Lemma~\ref{primitivepolynomial_lemma_over_F2}, hence is of full rank $2^n$. It follows that the system admits only the trivial solution $c_g=0$, $g\in \mathcal{N}_{0,n}$, which  implies the claim $f\equiv 0.$
	\end{proof}
	
	\begin{remark}\label{rmk:lemma 5.17 of 5 authors} Let $\mathcal{G}_n$ be the groupoid associated with a $\Z_2$-multispinal group $\mathfrak{G}_n$. 	Suppose that $D=\Theta((\gamma,g,\mu), C(\mu \eta))$ is a compact open bisection in $\mathcal{G}_n$. If
		$[(\bfo^s, h, \bfo^s),\bfo^\infty]\in \overline{D}$ where $h\in \mathcal{N}_{0,n}$  and $s\in \N$, then by \cite[Lemma 5.17]{CEPSS} we have that:
		\begin{enumerate}
			\item[(a)] There exists $\eta'\in X^*$ so that $\bfo^s=\mu\eta\eta'=\gamma(g\cdot(\eta\eta'))$.
			\item[(b)] For all $N\geq 1$ there exists $w\in X^*$ satisfying that
			\[
			g\vert_{\eta\eta'}\cdot (\bfo^Nw)=h\cdot(\bfo^Nw)\text{ and }(g\vert_{\eta\eta'})\vert_{\bfo^Nw}=h\vert_{\bfo^Nw}.
			\]
		\end{enumerate}
		Since $h$ fixes both $\bfo$ and $\bfz$, the first condition in (ii) becomes $g\vert_{\eta\eta'}\cdot (\bfo^Nw)=\bfo^Nw$.
	\end{remark}
	
	The next series of lemmas will eventually prove the simplicity of $\AA_\mathbb{K}(\mathcal{G}_n)$  when $\mathbb{K}$ has characteristic zero. The next result is the analogue of \cite[Lemma 5.27]{CEPSS} in the case of an arbitrary primitive polynomial over $\Z_2$.
	
	\begin{lemma}\label{lem:one in D bar means all in D bar}
		Let $\mathbb{K}$ be a field with $\chara(\K)=0$ and $\mathcal{G}_n$  the ample groupoid associated to the $\Z_2$-multispinal group $\mathfrak{G}_n$ from a primitive polynomial of degree $n\geq 2$ over $\Z_2$. Let $D=\Theta((\gamma,g,\mu), C(\mu \eta))$   be a compact open bisection, and recall that $\mathcal{N}_{0,n}$ is the image of $\F_{2^n}$ under $\iota_n$ in $\mathfrak{G}_n$. Then the following are equivalent:
		\begin{enumerate}
			\item For some $h\in \mathcal{N}_{0,n}$ we have $[(\varnothing, h, \varnothing),\bfo^\infty]\in \bar{D}$.
			\item For all $h\in \mathcal{N}_{0,n}$ we have $[(\varnothing, h, \varnothing),\bfo^\infty]\in \bar{D}$.
		\end{enumerate}
\end{lemma}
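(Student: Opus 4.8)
The plan is to prove the equivalence by showing that membership of one nucleus point $[(\varnothing,h,\varnothing),\bfo^\infty]$ in $\overline{D}$ forces a structural condition on the bisection $D=\Theta((\gamma,g,\mu),C(\mu\eta))$ that is manifestly independent of which $h\in\mathcal{N}_{0,n}$ we chose, and then reading that condition back to conclude membership for every $h$. Since $(2)\Rightarrow(1)$ is trivial (the set $\mathcal{N}_{0,n}$ is nonempty, containing $e$), the content is $(1)\Rightarrow(2)$.

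First I would invoke Remark~\ref{rmk:lemma 5.17 of 5 authors}: assuming $[(\bfo^s,h,\bfo^s),\bfo^\infty]=[(\varnothing,h,\varnothing),\bfo^\infty]\in\overline D$ (so $s$ may be taken $0$, or absorbed), part (a) gives $\eta'\in X^*$ with $\bfo^s=\mu\eta\eta'=\gamma(g\cdot(\eta\eta'))$, and part (b) gives, for every $N\geq 1$, a word $w\in X^*$ with $g\vert_{\eta\eta'}\cdot(\bfo^Nw)=\bfo^Nw$ and $(g\vert_{\eta\eta'})\vert_{\bfo^Nw}=h\vert_{\bfo^Nw}$. Now the key point: $\eta\eta'$ and the element $g\vert_{\eta\eta'}\in\mathfrak{G}_n$ depend only on $D$, not on $h$. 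Write $g':=g\vert_{\eta\eta'}$. The condition that $g'$ fixes arbitrarily long prefixes $\bfo^N$ (extended by some tail $w$) is a condition on $g'$ and the geometry of the tree, again independent of $h$; what the choice of $h$ contributes is only the identification of restrictions $g'\vert_{\bfo^Nw}$ with $h\vert_{\bfo^Nw}$. Since every $h\in\mathcal{N}_{0,n}$ fixes $\bfo$ and $\bfz$, all the $h\vert_{x}$ for letters $x$ equal $e$, and more generally the restriction behaviour of $g'$ along the $\bfo$-ray must eventually match that of a nucleus element; the point is that \emph{if it matches one, it matches all}, because the ``matching'' really just says $g'$ itself restricts into the nucleus after finitely many steps along $\bfo$ and then acts trivially, independently of the label $h$.

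Concretely, the second step is to extract from (a)--(b) the explicit form of $g'=g\vert_{\eta\eta'}$. Using $\bfo^s=\gamma(g\cdot(\eta\eta'))$ one sees $g$ sends $\eta\eta'$ into a power of $\bfo$, and writing $g=\iota_n(x)a^{\epsilon}$ in the standard normal form for $\mathfrak{G}_n$ and applying the restriction rules $\iota_n(x)\vert_{\bfo}=\iota_n(\varphi_n(x))$, $\iota_n(x)\vert_{\bfz}=\iota_n(\Tr(x))$, one computes $g'$ lies in $\iota_n(\F_{2^n})\cup\iota_n(\F_{2^n})a$. The constraint $g'\cdot(\bfo^Nw)=\bfo^Nw$ for all $N$ forces the $a$-part to vanish (a nontrivial $a$ would swap $\bfo$ and $\bfz$ at the first letter) and forces $\iota_n^{-1}(g')$, after passing along $N$ copies of $\bfo$, i.e. after applying $\varphi_n^N$, to lie in $H_{0,n}=\ker(\Tr)$ for all large $N$; by $\bigcap_{j}\ker(\Tr\circ\varphi_n^j)=\{0\}$ this pins $g'$ down enough that the remaining restriction-matching with any $h\in\mathcal{N}_{0,n}$ can be verified uniformly. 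Then I would run Remark~\ref{rmk:lemma 5.17 of 5 authors} (or rather its underlying \cite[Lemma 5.17]{CEPSS}, used in the converse direction, which is the standard ``net of elements of $D$ converging to the germ'') to produce, for each $h\in\mathcal{N}_{0,n}$, an explicit net in $D$ converging to $[(\varnothing,h,\varnothing),\bfo^\infty]$, using the very same $\eta'$ and the words $w$ adjusted only by replacing the tail so that the trivially-acting restriction is labelled by $h$ instead of the original $h$.

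The main obstacle I expect is the bookkeeping in step two: carefully tracking the normal form of $g$ under iterated restrictions along the ray $\bfo^\infty$ with occasional $\bfz$'s inserted, and verifying that the ``$(g')\vert_{\bfo^Nw}=h\vert_{\bfo^Nw}$'' condition is genuinely label-independent rather than secretly constraining $h$. The subtlety is that the word $w$ in part (b) may depend on both $N$ and $h$, so one must check that for a \emph{fixed} structural description of $g'$ coming from (1) with one particular $h_0$, one can re-choose $w$ (for each $N$ and each new $h$) to satisfy both equations simultaneously — this uses that $\varphi_n$ is a cyclic automorphism of order $2^n-1$ and that along suitable extensions the restriction of $g'$ can be driven to hit precisely the coset needed to match $h$. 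Once that uniform re-choice is in hand, closing the argument via convergence of germs in the (non-Hausdorff) tight groupoid topology is routine.
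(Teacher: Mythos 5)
Your overall strategy coincides with the paper's: use Remark~\ref{rmk:lemma 5.17 of 5 authors} to force $D$ into the normal form $\Theta((\bfo^L,g,\bfo^L),C(\bfo^L\bfo^p))$ with $g':=g\vert_{\bfo^{r+p}}=\iota_n(\alpha^c)\in\mathcal{N}_{0,n}$, observe that this form is independent of which $h$ witnessed condition (1), and then verify conditions (a)--(b) for an arbitrary $h'=\iota_n(\alpha^l)$ by re-choosing the tail as $w=\bfo^{N'}\bfz$ so that $\alpha^c-\alpha^l\in\ker(\Tr\circ\varphi_n^{N+N'})$; your closing paragraph correctly identifies this kernel-matching as the crux, and it is exactly how the paper closes the argument. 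However, two intermediate assertions are wrong, and one of them, taken literally, derails the proof. First, it is false that ``all the $h\vert_{x}$ for letters $x$ equal $e$'': elements of $\mathcal{N}_{0,n}$ fix the letters, but their restrictions are $h\vert_{\bfo}=\iota_n(\varphi_n(\iota_n^{-1}(h)))$ and $h\vert_{\bfz}=\iota_n(\Tr(\iota_n^{-1}(h)))$, the latter being $a$ whenever the trace is $1$ (e.g.\ $b\vert_{\bfz}=a$ in the Grigorchuk case); only elements of $\Z_2\subset\Gf_n$ have all restrictions equal to $e$. Second, and more seriously, you claim that the constraint $g'\cdot(\bfo^Nw)=\bfo^Nw$ for all $N$ forces $\varphi_n^N(\iota_n^{-1}(g'))\in\ker(\Tr)$ for all large $N$, and then invoke $\bigcap_{j}\ker(\Tr\circ\varphi_n^j)=\{0\}$ to ``pin $g'$ down.'' Since $\varphi_n$ has order $2^n-1$, that chain of deductions would force $g'=e$, which is false in general (any $g'=\iota_n(\alpha^c)$ can occur) and would collapse the lemma to a special case. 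The source of the error is that $w$ is existentially quantified and may be re-chosen for each $N$, so the first equation in (b) imposes no kernel condition at all; the correct conclusion to extract from (a)--(b) is only that $g'\in\mathcal{N}_{0,n}$, i.e.\ $g'=\iota_n(\alpha^c)$ for some $c$, with no further constraint. Once that step is replaced by this weaker (and correct) statement, the rest of your argument --- in particular the uniform re-choice of $w$ for each $N$ and each $h'$ --- goes through exactly as in the paper's proof.
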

	
	\begin{proof} Assume that $[(\varnothing, h, \varnothing),\bfo^\infty]\in \bar{D}$ for some $h\in \mathcal{N}_{0,n}$. Since $2^n-1$ is the period of restriction to $\bfo$, we have that $
		[(\varnothing, h, \varnothing),\bfo^\infty]=[(\bfo^s,h, \bfo^s),\bfo^\infty]
		$
		for all $s\in (2^n-1)\Z_{>0}$.
		
		By definition of the nucleus, there exists $m_0\in \N$ such that $g\vert_{v}\in \mathcal{N}_{0,n}$ whenever  $v\in X^\omega$ satisfies $|v|\geq m_0$.  Choose $m_1\in \N$ such that
		\[
		s:=m_0+m_1+|\mu\eta|=(2^n-1)s', s'\in \N.
		\]
		Let $r:=m_0+m_1$ and $\eta':=\bfo^r$. The assumption on $h$ implies that condition (a) in Remark~\ref{rmk:lemma 5.17 of 5 authors} holds, thus  $\mu=\bf1^{|\mu|}$, $\eta=\bfo^{|\eta|}$, $\eta'=\bfo^{r}$, $\gamma=\mu$ and
		\begin{equation}\label{eq:g fixes something}
		\bfo^s=\bfo^{r+|\mu\eta|}=\bfo^{|\mu|}(g\cdot(\bfo^{r+|\eta|})),\text{ in particular }\bfo^{r+|\eta|}=g\cdot(\bfo^{r+|\eta|});
		\end{equation}
		condition (b) likewise holds, thus for a given $N\in \N$ there exists $w\in X^*$ with
 		\begin{equation}\label{eq:g fixes 1 N word}
		(g\vert_{\bfo^{r+|\eta|}})\cdot \bfo^Nw=\bfo^Nw.
		\end{equation}
		This last equality in conjunction with the definition of $a$ in $\mathfrak{G}_n$ and the choice of $r$ imply that $g\vert_{\bfo^{|\eta|+r}}\in \mathcal{N}_{0,n}$. In particular, these considerations show that assuming condition (1) of the lemma,  the set $D$ has  form
\begin{equation}\label{eq: general form of D}
D=\Theta((\bfo^L,g,\bfo^L),C(\bfo^L\bfo^p)), \text{ with }g\vert_{\bfo^{r+p}}\in \mathcal{N}_{0,n}, p+r+L\in (2^n-1)\Z_{>0}
\end{equation}
where $L,p,r$ are positive integers (here $L=|\mu|, p=|\eta|$).

Let $c\in \{0,1,\dots, 2^n-1\}$ be such that $g\vert_{\bfo^{|\eta|+r}}=\iota_n(\alpha^c)$. Take $h'\in \mathcal{N}_{0,n}$ distinct from $h$.  To show that $[(\varnothing, h', \varnothing),\bfo^\infty]\in \bar{D}$, we must verify the conditions in \cite[Lemma 5.17]{CEPSS}. We can run the previous argument backwards with $\eta'=\bfo^r\in X^*$.  Then condition (a) is already satisfied in the form  $\bfo^s=\bfo^{(2^n-1)s'}=\bfo^{m_0+m_1+|\eta\mu|}=\mu\eta\eta'$, and for condition  (b) we let  $N\in \N$, $N\geq 1$ and need to find $w\in X^*$ such that
		\begin{equation}\label{eq:condition two of lemma 5.17}
		\bigl((h')^{-1}\iota_n(\alpha^c)\bigr)\cdot \bfo^Nw=\bfo^Nw\text{ and }\iota(\alpha^c)\vert_{\bfo^Nw}=h'\vert_{\bfo^Nw}.
		\end{equation}
		If $h'$ and $\iota_n(\alpha^c)$ are equal there is nothing to prove, so assume that $h'=\iota_n(\alpha^l)$ for some $l=0,1,\dots, 2^n-1$, $l\neq c$. The first condition in \eqref{eq:condition two of lemma 5.17} is satisfied since $\iota_n(\alpha^l)^{-1}\iota_n(_n\alpha^c)$ is in $\mathcal{N}_{0,n}$ and thus fixes both $\bfo$ and $\bfz$. The second condition becomes
		\[
		\iota_n(\varphi^N(\alpha^c-\alpha^l))\vert_w=e,
		\]
		which means we can take $w=\bfo^{N'}\bfz$ for some $N'\in \N$ so that $\alpha^c-\alpha^l\in \ker(\Tr\circ \varphi_n^{N+N'})$, similar to the proof of the assertion $U_N(z_{\iota_n(\alpha^c)})\cap U_N(z_{\iota_n(\alpha^l)})\neq \emptyset$ in Lemma \ref{lem:primitive polynomial over Z two}.
	\end{proof}
	
	The next result is an analog of \cite[Lemma 5.28]{CEPSS}, whose proof we follow.

	\begin{lemma}\label{lemma:restriction}
		Let $D=\Theta((\gamma,g,\mu), C(\mu \eta))$ be a compact open bisection in the groupoid associated to $\mathfrak{G}_n$. If  $[(\varnothing, e, \varnothing),\bfo^\infty]\in \bar{D}$, then there are $m_D\in \N$ and $g_D\in \mathcal{N}_{0,n}$ such that
\[
		D\cap (\bigcup_{g\in \mathcal{N}_{0,n}}U_{m_D}(z_g))=U_{m_D}(z_{g_D}).
		\]
	\end{lemma}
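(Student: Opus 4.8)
The plan is to follow the thread of \cite[Lemma 5.28]{CEPSS} closely, using the structural description of $D$ obtained in the previous lemma. First I would invoke \lemref{lem:one in D bar means all in D bar}: since $[(\varnothing,e,\varnothing),\bfo^\infty]\in\bar D$ and $e\in\mathcal N_{0,n}$, in fact $[(\varnothing,h,\varnothing),\bfo^\infty]\in\bar D$ for every $h\in\mathcal N_{0,n}$. Moreover, the proof of that lemma shows $D$ has the normal form \eqref{eq: general form of D}, namely $D=\Theta((\bfo^L,g,\bfo^L),C(\bfo^L\bfo^p))$ with $g\vert_{\bfo^{r+p}}\in\mathcal N_{0,n}$ for a suitable $r$ and $p+r+L\in(2^n-1)\Z_{>0}$. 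So I would set $g_D:=g\vert_{\bfo^{p+r}}=\iota_n(\alpha^c)$ (or $e$), and the natural candidate for $m_D$ would be chosen large enough — e.g. $m_D:=L+p+r$ or the smallest multiple of $2^n-1$ exceeding $L+p$ — so that elements of $D$ get their germ labels computed by a restriction lying in the nucleus.

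The key computation is to identify $D\cap U_{m_D}(z_g)$ for each $g\in\mathcal N_{0,n}$. A point of $U_{m_D}(z_g)$ is a class $[(\varnothing,g,\varnothing),\bfo^{m_D}\nu]$ with $\nu\in X^\omega$; it lies in $D=\Theta((\bfo^L,g',\bfo^L),C(\bfo^L\bfo^p))$ precisely when the word $\bfo^{m_D}\nu$ can be written $\bfo^L\bfo^p\xi$ — automatic once $m_D\ge L+p$ — and the two germs agree, i.e. $(g')^{-1}g$ restricted appropriately fixes the relevant tail and has trivial restriction there. Because $m_D$ was chosen past the threshold where $g'$-restrictions enter the nucleus, $(g')^{-1}g=\iota_n(\alpha^l)\iota_n(\alpha^c)^{-1}$ (or a trivial/degenerate version) is in $\mathcal N_{0,n}$, hence fixes both letters, so the first germ condition is free; the second becomes a statement $\alpha^l-\alpha^c\in\ker(\Tr\circ\varphi_n^{N+N'})$ for suitable $N'$, which — exactly as in \lemref{lem:primitive polynomial over Z two} and \lemref{lem:one in D bar means all in D bar} — is solvable iff $l=c$, and otherwise fails for every extension because the computation forces a specific letter. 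Thus $D\cap U_{m_D}(z_g)$ is all of $U_{m_D}(z_g)$ when $g=g_D$ and is empty otherwise, giving $D\cap\bigl(\bigcup_{g\in\mathcal N_{0,n}}U_{m_D}(z_g)\bigr)=U_{m_D}(z_{g_D})$.

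The main obstacle I anticipate is bookkeeping: one must carefully track how the various length parameters ($L$, $p$, $r$, $m_0$, and the period $2^n-1$) interact so that the restriction $g\vert_{\bfo^{\text{long word}}}$ genuinely lands in $\mathcal N_{0,n}$ and so that the congruence $p+r+L\equiv 0\pmod{2^n-1}$ (needed to make $\bfz$ appear at the right spot in the tail) is compatible with the choice of $m_D$. A second subtle point is verifying the inclusion $D\cap U_{m_D}(z_{g_D})=U_{m_D}(z_{g_D})$ rather than merely a nonempty intersection: this requires that \emph{every} point of $U_{m_D}(z_{g_D})$, not just one, lies in $D$, which is where the normal form \eqref{eq: general form of D} and the freedom to take $m_D$ as large as needed are essential — enlarging $m_D$ only shrinks $U_{m_D}(z_{g_D})$, so one is free to pick it past all thresholds. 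Once these thresholds are pinned down the rest is the same nucleus-fixes-letters / $\ker(\Tr\circ\varphi_n^j)$ argument already used twice above.
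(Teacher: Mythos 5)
Your setup is the same as the paper's: you invoke Lemma~\ref{lem:one in D bar means all in D bar} to put $D$ in the normal form \eqref{eq: general form of D}, take $m_D=L+t$ with $t=p+r$, and read off $g_D$ from the restriction $g\vert_{\bfo^t}=\iota_n(\alpha^c)$. (A minor bookkeeping point: the paper takes $g_D=\iota_n(\alpha^l)$ with $(L+t)+l\equiv c\pmod{2^n-1}$, so that $g_D\vert_{\bfo^{L+t}}=g\vert_{\bfo^t}$; your choice $g_D=\iota_n(\alpha^c)$ agrees with this only because the normal form guarantees $L+t\in(2^n-1)\Z_{>0}$, and you should say so explicitly.) Up to and including the inclusion $U_{m_D}(z_{g_D})\subseteq D$, your argument tracks the paper's.

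The genuine gap is in your final step, where you claim $D\cap U_{m_D}(z_{g'})=\emptyset$ for $g'\neq g_D$ because the condition $\alpha^l-\alpha^c\in\ker(\Tr\circ\varphi_n^{N+N'})$ is ``solvable iff $l=c$.'' This is false: any nonzero element of $\F_{2^n}$ lies in $2^{n-1}-1$ of the hyperplanes $H_{j,n}$, which is exactly how Lemma~\ref{lem:primitive polynomial over Z two} proves that $U_m(z_{g_1})\cap U_m(z_{g_2})$ is \emph{nonempty} for every pair of distinct $g_1,g_2\in\mathcal N_{0,n}$. Your claim is also internally inconsistent: since you have $U_{m_D}(z_{g_D})\subseteq D$, you get $D\cap U_{m_D}(z_{g'})\supseteq U_{m_D}(z_{g_D})\cap U_{m_D}(z_{g'})\neq\emptyset$. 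The statement you actually need, and the one the paper proves, is the inclusion $D\cap U_{m_D}(z_{g'})\subseteq U_{m_D}(z_{g_D})$ for each $g'=\iota_n(\alpha^s)$ with $g'\neq g_D$: a germ lying in both sets forces the nonzero element $\varphi_n^{L+t}(\alpha^s)-\varphi_n^t(\alpha^c)$ into some subgroup $H_{j,n}$, which places that germ inside $U_{m_D}(z_{g_D})$ as well, and then the union collapses to $U_{m_D}(z_{g_D})$. Without replacing ``empty intersection'' by this containment, the proof does not go through; this is precisely the non-Hausdorff overlap phenomenon the whole paper is built around.
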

	
	\begin{proof}
		Since $e\in \mathcal{N}_{0,n}$, we may assume that $D$ is of the form given in equation \eqref{eq: general form of D}.
		Let $t=r+p$. Then
		\[
		\Theta((\bfo^{L+t},g\vert_{\bfo^t},\bfo^{L+t}),C(\bfo^{L+t}))\subset D.
		\]
			Supposing now that $g\vert_{\bfo^{t}}=\iota_n(\alpha^{c})$, choose $l=0,1,\dots,2^n-1$ so that
		\[
		(L+t)+l\equiv c \mod{(2^n-1)}.
		\]
		Now set $g_D:=\iota_n(\alpha^l)\in \mathcal{N}_{0,n}$ and $m_D:=L+t$.  Exactly as in the previous inclusion above, we have that
		\[
		U_{m_D}(z_{g_D})\subset D \text{ since }g_D\vert_{\bfo^{L+t}}=g\vert_{\bfo^{t}}.
		\]
To conclude the proof of the lemma,
		it remains to prove that for every $g'\in \mathcal{N}_{0,n}$ with $g'\neq g_D$ we have the inclusion
\[
D\cap U_{L+t}(z_{g'})\subset U_{L+t}(z_{g_D}).
\]
Writing $g'=\iota_n(\alpha^s)$ where $\alpha^s\neq \alpha^l$, this reduces to showing that the nonzero element
$\varphi_n^{L+t}(\alpha^s)-\varphi_n^t(\alpha^c)$ is in some subgroup $\Hf_{j,n}$, which is true by the definition in \eqref{eq:subgroups Hj}.
	\end{proof}

\begin{lemma}\label{lem:f restricts to combination of characteristic fcts of Um} If $f\in \mathcal{A}_\K(\mathcal{G}_n)$, then there is a non-empty open set $U_m$ so that $f\vert_{U_m}$ is a linear combination, with coefficients in $\mathbb{K}$, of characteristic functions $1_{C}$, with $C\in \mathcal{F}_m$ for some $m\geq 1$.
\end{lemma}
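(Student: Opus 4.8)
The plan is to reduce an arbitrary $f \in \mathcal{A}_\K(\mathcal{G}_n)$ to a form that can be compared with the basis sets $\mathcal{F}_m = \{U_m(z_g) \mid g \in \mathcal{N}_{0,n}\}$ by working in a suitably deep neighbourhood of the point $\bfo^\infty$ in the unit space, mirroring the argument in \cite[Section 5]{CEPSS}. First I would write $f = \sum_{D \in \mathcal{F}} a_D 1_D$ as a finite sum of characteristic functions of compact open bisections. Passing to the disjointification \eqref{eq:function disjointification}, I may assume $f = \sum_{\JJ} c_\JJ 1_{M_\JJ}$, so the support of $f$ is a disjoint union of the sets $M_\JJ$ from \eqref{eq:M disjointified}. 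The key observation is that we only care about the behaviour of $f$ near the points $z_g = [(\emptyset,g,\emptyset),\bfo^\infty]$ for $g \in \mathcal{N}_{0,n}$; so I would restrict attention to those bisections $D$ in $\mathcal{F}$ whose closure contains $[(\varnothing,e,\varnothing),\bfo^\infty]$, or more generally some $[(\varnothing,h,\varnothing),\bfo^\infty]$ with $h \in \mathcal{N}_{0,n}$ — by Lemma~\ref{lem:one in D bar means all in D bar} these two conditions are equivalent, so the relevant $D$'s are exactly those whose closures contain all the $z_h$, $h\in\mathcal{N}_{0,n}$.

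Next I would invoke Lemma~\ref{lemma:restriction}: for each such bisection $D$ there are $m_D \in \N$ and $g_D \in \mathcal{N}_{0,n}$ with
\[
D \cap \Bigl(\bigcup_{g \in \mathcal{N}_{0,n}} U_{m_D}(z_g)\Bigr) = U_{m_D}(z_{g_D}).
\]
For the bisections $D$ in $\mathcal{F}$ whose closure does \emph{not} meet $\{z_h \mid h \in \mathcal{N}_{0,n}\}$, closedness of $\overline{D}$ away from these finitely many points lets me find a single $m$ large enough that $U_m(z_g) \cap D = \emptyset$ for all such $D$ and all $g \in \mathcal{N}_{0,n}$ simultaneously. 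Taking $m := \max\{m_D \mid D \text{ relevant}\}$ and also at least this last threshold, and using that $U_\ell(z_g) \subseteq U_m(z_g)$ for $\ell \geq m$, I get that on the open set $U_m := \bigcup_{g \in \mathcal{N}_{0,n}} U_m(z_g)$ every $1_D$ restricts either to $0$ or to $1_{U_m(z_{g_D})}$, whence $f\vert_{U_m}$ is a $\K$-linear combination of characteristic functions $1_C$ with $C \in \mathcal{F}_m$. Non-emptiness of $U_m$ is immediate since each $U_m(z_g)$ is a non-empty compact open bisection.

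The main obstacle I anticipate is the bookkeeping around bisections whose closures do not contain any $z_h$: in the non-Hausdorff setting one must be careful that $\overline{D}$ behaves well, but since the relevant points $z_h$ lie in the Hausdorff unit space and $\overline D$ is closed, for each fixed $D$ not containing $z_h$ in its closure one can separate them by an open neighbourhood of $z_h$, and since $\mathcal{N}_{0,n}$ and $\mathcal{F}$ are both finite, one such $m$ works uniformly. A secondary technical point is checking that the value of $f$ on each piece $U_m(z_{g_D})$ is genuinely constant, i.e. that the contributions $1_{M_\JJ}$ of smaller intersections behave correctly; this is handled by the disjointification formula \eqref{eq:function disjointification} together with the explicit description of intersections $U_m(z_{g_1}) \cap U_m(z_{g_2})$ from Lemma~\ref{lem:primitive polynomial over Z two}. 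Once these are in place, the statement follows by collecting the (finitely many) surviving terms.
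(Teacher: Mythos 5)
Your proposal is correct and follows essentially the same route as the paper: split the bisections in the finite presentation of $f$ according to whether their closures meet $\{z_h \mid h\in\mathcal{N}_{0,n}\}$, use Lemma~\ref{lem:one in D bar means all in D bar} to reduce to $z_e\in\overline{D}$ and then Lemma~\ref{lemma:restriction} to identify $D\cap U_{m_D}$ with a single $U_{m_D}(z_{g_D})$, discard the remaining bisections on a deep enough $U_m$ via the neighbourhood basis, and take a maximum of the finitely many thresholds. The only cosmetic difference is your initial pass to the disjointified form \eqref{eq:function disjointification}, which the paper does not need here and which your argument also does not actually use.
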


\begin{proof} The proof is similar to the first half  of \cite[Lemma 5.29]{CEPSS}, and we adopt some of their notation. Let $f\in \mathcal{A}_\K(\mathcal{G}_n)$, thus  $f=\sum_{B\in \FF}a_B1_B$ for a finite set $\FF$ of compact open bisections and $a_B\in \K$. Let $U_m=\bigcup_{z_f\in \mathcal{Z}}U_{m}(z_f)$. We will show that there is $m\in \N$ such that  $ f(x)=0$ for $x\notin U_m$ and
		\[
		f\vert_{U_m}\in \operatorname{span}_\K \{1_N\mid N\in \mathcal{F}_m\}.
		\]
		
		Let $\mathcal{Z}:=\{z_g\mid g\in \mathcal{N}_{0,n}\}$. Suppose that $B\in \FF$ is so that $\overline{B}\cap \mathcal{Z}=\emptyset$. Then for every $g\in \mathcal{N}_{0,n}$ there is a positive integer $m_{g,B}$ such that $B\cap U_{m_{g,B}}(z_g)=\emptyset$. Consequently, $B\cap U_l(z_g)=\emptyset$ for  all $l\geq m_{z,B}$. With $m_B=\operatorname{max}_{g\in \mathcal{N}_{0,n}}m_{g,B}$, we have that $B\cap\bigcup_{z\in\mathcal{Z}} U_{m_B}(z_g)=\emptyset$ for all $l\geq \operatorname{max}_B\{m_B\}$. Let $U_l=\bigcup U_l(z_g)$. Then
		\[
		\bigl(\sum_{\{B\mid \overline{B}\cap \mathcal{Z}=\emptyset\}}a_B1_B\bigr)\vert_{U_{l}}=\sum_{B} a_B1_{B\cap U_{l}}=0.
		\]
		
		Now suppose that $D\in \FF$ satisfies $\overline{D}\cap \mathcal{Z}\neq \emptyset$. By Lemma~\ref{lem:one in D bar means all in D bar} we may assume that $z_e\in \overline{D}$. Then by Lemma~\ref{lemma:restriction}, there are $f_D\in \mathcal{N}_{0,n}$ and $m_D\in \N$ such that $D\cap (\bigcup_{f\in \mathcal{N}_{0,n}}U_{m_D}(z_f))=U_{f_D, m_D}$. Then also
\[
D\cap \bigcup_f U_j(z_f)=U_{f_D,j}=U_j(z_{f_D}).
\]
for all $j\geq m_D$.  Set $m=\operatorname{max}\{m_B,m_D\}$. Let $\FF_{g}$ denote the subset of all $D\in \FF$  with the property that $D\cap \bigcup_{z_f\in \mathcal{Z}}U_{m}(z_f)=U_m(z_{g})$.  It follows that
		\[
		\bigl(\sum_{\{D\mid \overline{D}\cap\mathcal{Z}\neq\emptyset\}}a_D1_D\bigr)\vert_{U_m}=\sum_{D} a_D1_{U_m(z_D)}=\sum_{g\in\mathcal{N}_{0,n}}(\sum_{D\in \FF_g}a_D) 1_{U_m(z_g)},
		\]
		which implies the lemma.
	\end{proof}

The following is an analog of \cite[Lemma 5.29]{CEPSS}.
	
	\begin{lemma}\label{lem:f nonzero on ze is singular} 	Let $\mathbb{K}$ be a field with $\chara(\K)=0$ and $\mathcal{G}_n$  the ample groupoid associated to the $\Z_2$-multispinal group $\mathfrak{G}_n$ from a primitive polynomial of degree $n\geq 2$ over $\Z_2$. Then if  $f\in \mathcal{A}_{\K}(\mathcal{G}_n)$ with $f(z_e) \not= 0$, we have that  $supp(f)$ has nonempty interior. Thus, $f\notin \mathcal{S}_\K(\mathcal{G}_n)$.
\end{lemma}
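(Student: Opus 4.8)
The plan is to localize $f$ around the points $z_g$, $g\in\mathcal{N}_{0,n}$, and then quote \lemref{lem:full rank}. First I would apply \lemref{lem:f restricts to combination of characteristic fcts of Um} to obtain an integer $m\geq 1$, scalars $(c_g)_{g\in\mathcal{N}_{0,n}}$ in $\K$, and the nonempty open set $U_m:=\bigcup_{g\in\mathcal{N}_{0,n}}U_m(z_g)$ such that $f$ agrees on $U_m$ with
\[
h:=\sum_{g\in\mathcal{N}_{0,n}}c_g\,1_{U_m(z_g)}\in\mathcal{A}_\K(\mathcal{G}_n).
\]
Here $h$ is exactly of the form appearing in \lemref{lem:full rank}, and since $h$ is a linear combination of the $1_{U_m(z_g)}$ it vanishes off $U_m$, so $\supp(h)\subseteq U_m$.

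Next I would observe that $h\neq 0$. Indeed $\bfo^\infty\in C(\bfo^m)$, so $z_e=[(\varnothing,e,\varnothing),\bfo^\infty]\in U_m(z_e)\subseteq U_m$, and hence $h(z_e)=f(z_e)\neq 0$ by hypothesis. (A short germ computation shows in fact $h(z_e)=c_e$: if $g=\iota_n(\alpha^j)\neq e$, then $z_e\in U_m(z_g)$ would force $(\bfo^N,e,\bfo^N)=(\bfo^N,\iota_n(\alpha^{N+j}),\bfo^N)$ for some $N$, which is impossible since $\iota_n$ is injective and $\alpha^{N+j}\neq 0$.) Now \lemref{lem:full rank}, applied to the nonzero element $h$, gives $h\notin\mathcal{S}_\K(\mathcal{G}_n)$, which by \cite[Proposition 3.6]{CEPSS} means that $\supp(h)$ has nonempty interior. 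Since $h$ and $f$ coincide on the open set $U_m$ and $\supp(h)\subseteq U_m$, we get $\supp(h)\subseteq\supp(f)$, so $\supp(f)$ has nonempty interior as well; a final appeal to \cite[Proposition 3.6]{CEPSS} yields $f\notin\mathcal{S}_\K(\mathcal{G}_n)$.

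I do not expect a serious obstacle at this point: the statement is really an assembly of the earlier lemmas, and the only new content is the propagation of the nonvanishing of $f$ to the localization $h$ at $z_e$, together with the containment $\supp(h)\subseteq U_m$, which places the nonempty interior of $\supp(h)$ inside the open set where $h$ and $f$ agree. The substantive work has been done already in \lemref{lem:full rank} (resting on the full-rank matrix $W_n$ of \lemref{primitivepolynomial_lemma_over_F2}) and in \lemref{lem:f restricts to combination of characteristic fcts of Um}.
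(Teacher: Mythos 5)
Your argument is correct and follows essentially the same route as the paper's proof: localize $f$ via Lemma~\ref{lem:f restricts to combination of characteristic fcts of Um}, note that $z_e\in U_m$ forces the localized combination to be nonzero, invoke Lemma~\ref{lem:full rank} to get nonempty interior of its support, and transfer this back to $\supp(f)$. The parenthetical germ computation of $h(z_e)=c_e$ is not needed, but everything else matches the paper's reasoning.
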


	\begin{proof} Take $f\in \mathcal{A}_{\K}(\mathcal{G}_n)$ with $f(z_e) \not= 0$. By Lemma~\ref{lem:f restricts to combination of characteristic fcts of Um}, there is an open set $U_m$ such that $f\vert_{U_m}$ is a linear combination as in equation \eqref{eq:f as combination of Um from nucleus}. Since $z_e\in U_t$ for every $t\in \N$, we infer that $f\vert_{U_m}(z_e)\neq 0$, so the support of $f\vert_{U_m}$ has non-empty interior by Lemma~\ref{lem:full rank}. But any open set inside $U_m$ is also in the interior of the support of $f$, which proves the assertion of the Lemma.
		\end{proof}

The next result  generalizes \cite[Lemma 5.30]{CEPSS}. The content is a sufficient condition for simplicity of the Steinberg algebra $\mathcal{A}_{\K}(\mathcal{G}_n)$.

\begin{lemma}\label{lem:the set Z zero} Let $\K$ be a field of characteristic zero and $\mathcal{G}_n$  a $\Z_2$-multispinal groupoid. If the support of every singular function on $\G_n$ is disjoint from $\{z_e \}$, then the ideal  $S_\K(\G_n)$ of singular functions is trivial.
\end{lemma}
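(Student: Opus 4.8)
The plan is to argue by contradiction: suppose $S_\K(\mathcal{G}_n)\neq\{0\}$ and fix a nonzero $f\in S_\K(\mathcal{G}_n)$; I will construct a singular function whose support contains $z_e$, contradicting the hypothesis. The only mechanism needed to relocate supports is that $S_\K(\mathcal{G}_n)$ is an ideal, hence closed under left and right convolution by characteristic functions $1_B$ of compact open bisections $B$; since $B$ is a bisection, for each $\gamma$ the relevant factorisation of $\gamma$ in such a convolution is unique, so these convolutions merely translate values of a function along the groupoid action.

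First I would localise $\supp(f)$. Disjointifying $f$ as in \eqref{eq:function disjointification}, write $f=\sum_\JJ c_\JJ 1_{M_\JJ}$ with $M_\JJ$ as in \eqref{eq:M disjointified}. Singularity of $f$ forces each $M_\JJ$ with $c_\JJ\neq 0$ to have empty interior, so $\bigcap_{B\in\JJ}B\subseteq\overline{\bigcup_{D\notin\JJ}D}$, whence $M_\JJ\subseteq\bigcup_{D\notin\JJ}(\overline{D}\setminus D)$. Since $\mathcal{G}_n^{(0)}$ is Hausdorff and $D$ is compact, $s(D)$ is clopen, so $s(\overline{D})\subseteq s(D)$; a routine convergence argument then shows every point of $\overline{D}\setminus D$ fails to be separated from a second element of $\mathcal{G}_n$ with the same source and range, and translating by the inverse of that point shows both its source and its range are \emph{non-Hausdorff units} (units $u$ at which $\mathcal{G}_n$ is not Hausdorff). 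In particular every $\gamma_0\in\supp(f)$ has $r(\gamma_0)$ a non-Hausdorff unit.

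The second and decisive step is the structural fact that the non-Hausdorff units of $\mathcal{G}_n$ are precisely the points of the orbit $[z_e]$ of $z_e$, i.e.\ the sequences in $X^{\omega}$ that are eventually equal to $\bfo$. This is the analogue for $\mathfrak{G}_n$ of the corresponding fact for the Grigorchuk group in \cite{CEPSS}, and one reads it off from the restriction rules $\iota_n(x)|_{\bfo}=\iota_n(\varphi_n(x))$, $\iota_n(x)|_{\bfz}=\iota_n(\Tr(x))$ and the identity $\bigcap_j\ker(\Tr\circ\varphi_n^j)=\{0\}$: along a ray $w=w_1w_2\cdots$ the successive restrictions of an element of $\mathfrak{G}_n$ eventually lie in the nucleus, and a nucleus element can be nontrivial yet agree with the identity on cylinders accumulating at $w$ only when $w=\bfo^{\infty}$ — exactly the analysis underlying Lemmas \ref{lem:one in D bar means all in D bar}, \ref{lemma:restriction}, \ref{lem:f restricts to combination of characteristic fcts of Um} and \ref{lem:f nonzero on ze is singular}. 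Granting this, $r(\gamma_0)\in[z_e]$.

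Then I would finish as follows. Choose $\alpha\in\mathcal{G}_n$ with $r(\alpha)=z_e$ and $s(\alpha)=r(\gamma_0)$, and set $\beta:=(\alpha\gamma_0)^{-1}$, so that $\alpha\gamma_0\beta=z_e$ and $s(\beta)=z_e$. Picking compact open bisections $B\ni\alpha$ and $B'\ni\beta$, the function $f':=1_B*f*1_{B'}$ lies in $S_\K(\mathcal{G}_n)$ because the latter is an ideal; since $\alpha$ and $\beta$ are the unique elements of $B$ and $B'$ with range, resp.\ source, equal to $z_e$, unravelling the factorisation $z_e=\alpha\gamma_0\beta$ gives $f'(z_e)=f(\alpha^{-1}z_e\beta^{-1})=f(\gamma_0)\neq 0$. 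Thus $f'$ is a singular function with $z_e\in\supp(f')$, contradicting the hypothesis, so $S_\K(\mathcal{G}_n)=\{0\}$. The main obstacle is the second step: pinning down the non-Hausdorff locus of $\mathcal{G}_n$ is exactly where passing from the Grigorchuk case $n=2$ to arbitrary $n\geq 3$ takes effort, whereas the localisation and translation arguments are routine.
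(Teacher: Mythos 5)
Your argument is correct and is essentially the paper's own proof, which simply defers to the proof of \cite[Lemma 5.30]{CEPSS}: locate the support of a putative nonzero singular function inside sets of the form $\overline{D}\setminus D$, observe that the non-Hausdorff locus lies in the orbit of $z_e$, and use the ideal property of $S_\K(\G_n)$ together with characteristic functions of bisections to translate the support onto $z_e$. You have in addition spelled out the details (notably the $\Tr$-based verification that non-Hausdorff units lie in $[z_e]$, the analogue of \cite[Lemma 5.24]{CEPSS} for general $n$) that the paper leaves implicit in the citation.
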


\begin{proof} Having established Lemma~\ref{lem:f nonzero on ze is singular}, we note that
the proof of \cite[Lemma 5.30]{CEPSS} makes use  of the structure of the subset of the nucleus of the first Grigorchuk group determined by the image of $\F_{2^n}$, for $n=2$, and works in full generality for an arbitrary $\Z_2$-multispinal group.
	\end{proof}

We can now state the simplicity of the Steinberg algebra $\mathcal{A}_{\K}(\mathcal{G}_n)$.

	\begin{thm}\label{thm:simple Steinberg algebra multi} Suppose that $\K$ is a field  and $\mathcal{G}_n$ is a $\Z_2$-multispinal groupoid with Hausdorff unit spaces as above. If  $\chara(\K)=0$, then $A_\K(\G_n)$ is simple.
\end{thm}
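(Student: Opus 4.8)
The plan is to assemble the lemmas already proved in this section into the standard simplicity criterion for Steinberg algebras of minimal effective ample groupoids. First I would recall the general fact (see \cite{CEPSS} or \cite{Stei-Sza-Simpl}) that for a minimal, effective, Hausdorff-unit-space ample groupoid $\G$, every nonzero ideal of $A_\K(\G)$ either contains a nonzero element of $A_\K(\G^{(0)})$ — which by minimality forces the ideal to be everything — or is contained in the ideal $S_\K(\G)$ of singular functions; consequently $A_\K(\G)$ is simple as soon as $S_\K(\G)=0$. Thus the theorem reduces to showing that $S_\K(\G_n)=0$ when $\chara(\K)=0$. Since $\G_n=\G_{\mathfrak G_n,X}$ is effective (the action of $\mathfrak G_n$ on $X^*$ is faithful, as noted in Section~\ref{sec:back-Gpds}) and minimal (it is a tight groupoid of the inverse semigroup $S_{\mathfrak G_n,X}$, which acts minimally on $X^\omega$), both hypotheses are in force.

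Next I would invoke Lemma~\ref{lem:the set Z zero}: to conclude $S_\K(\G_n)=0$ it suffices to show that the support of every singular function is disjoint from $\{z_e\}$. Equivalently, if $f\in\mathcal A_\K(\G_n)$ satisfies $f(z_e)\neq0$, then $f$ is not singular — and this is exactly the content of Lemma~\ref{lem:f nonzero on ze is singular}. So the chain is: Lemma~\ref{lem:f nonzero on ze is singular} $\Rightarrow$ hypothesis of Lemma~\ref{lem:the set Z zero} is vacuously the only way a singular function could be supported near $z_e$, hence $S_\K(\G_n)=0$ $\Rightarrow$ (by the general ideal dichotomy, using minimality and effectiveness) $A_\K(\G_n)$ is simple. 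I would also remark that the case $n=2$ recovers \cite[Theorem~5.22(1)]{CEPSS} for the Grigorchuk group, and that the characteristic-zero hypothesis enters precisely through the full-rank statement of Lemma~\ref{primitivepolynomial_lemma_over_F2} (equivalently Lemma~\ref{lem:full rank general}), which is used inside Lemma~\ref{lem:full rank} and propagates through Lemmas~\ref{lem:f restricts to combination of characteristic fcts of Um} and \ref{lem:f nonzero on ze is singular}.

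The main obstacle, and the only place real work is needed beyond citation, is pinning down the precise form of the "ideal dichotomy" being used and checking its hypotheses apply to $\G_n$ as set up here. Specifically one must confirm: (i) that $\G_n$ is minimal — this follows because for the self-similar action groupoid every orbit in $X^\omega$ is dense, since one can move any cylinder $C(\mu)$ onto any other using words in $X^*$ and the group action; (ii) that $\G_n$ is effective, which was recorded in the excerpt; and (iii) that the orbit of the unit $z_e$'s base point is such that a singular function supported away from $\{z_e\}$ must actually be supported away from the whole orbit, so that an argument analogous to \cite[Lemma~5.30]{CEPSS} closes — this is what Lemma~\ref{lem:the set Z zero} already packages. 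With those in hand the proof is short: $S_\K(\G_n)=0$ by the two preceding lemmas, and minimality plus effectiveness upgrade the absence of singular functions to simplicity of $A_\K(\G_n)$. I would therefore present the proof as roughly two sentences of citation plus the explicit reduction, mirroring the structure of the corresponding argument in \cite{CEPSS}.
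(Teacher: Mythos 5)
Your proposal is correct and follows essentially the same route as the paper: the paper's proof is a one-line citation combining Lemma~\ref{lem:f nonzero on ze is singular} (which shows every singular function vanishes at $z_e$), Lemma~\ref{lem:the set Z zero} (which upgrades this to $S_\K(\G_n)=0$), and the general simplicity criterion \cite[Theorem 3.14]{CEPSS} for minimal effective ample groupoids. Your additional verification of minimality and effectiveness of $\G_n$, and your tracing of where $\chara(\K)=0$ enters, are consistent with what the paper establishes elsewhere in Sections 2 and 4.
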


\begin{proof} The result follows from  Lemmas~\ref{lem:f nonzero on ze is singular}  and \ref{lem:the set Z zero} in connection with  \cite[Theorem 3.14]{CEPSS}.
\end{proof}

\section{Simplicity of the  $C^*$-algebra of $\Z_2$-multispinal groups}\label{section:simplicity C*-algebra}

We now complete the proof that the $C^*$-algebras corresponding to the $\Z_2$-multispinal groups are simple, following the method outlined in \cite{CEPSS} for the Grigorchuk group.  First, we give an analog of Lemma 5.31 of \cite{CEPSS}:

\begin{lemma}
	\label{CEPSSLemma5.31}  Let $f\in \mathcal{A}_{\C}(\mathcal{G}_n),$ where $\mathcal{G}_n$ is the groupoid coming from a self-similar action corresponding to  any $\Z_2$- multispinal group $\mathfrak{G}_n,$ and suppose that there exists $m\in\N$ such that $f$ takes the form
	$$f\;=\;\sum_{g\in {\mathcal N}_{0,n}}c_g 1_{U_m(z_g)},\;\{c_g:\;g\in {\mathcal N}_{0,n}\}\subset \C,$$
	where recall that $\mathcal{N}=\mathcal{N}_{0,n}\sqcup\{a\}$ denotes the nucleus of $\mathfrak{G}_n.$  If $f(z_e)\not=0,$ then $|f(z)|>\frac{|f(z_e)|}{2^n}$ on a set of nonempty interior, i.e. $f\notin \mathcal{S}_{\C}(\mathcal{G}_n).$

\end{lemma}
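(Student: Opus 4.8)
The plan is to follow the structure of \cite[Lemma 5.31]{CEPSS} and combine the algebraic rank estimate already available from Lemma~\ref{primitivepolynomial_lemma_over_F2} with the geometric picture of the sets $M_\JJ$ coming from Lemma~\ref{lem:primitive polynomial over Z two}. Start by disjointifying $f$ exactly as in \eqref{eq:function disjointification}: on each nonempty $\JJ\subseteq\mathcal{F}_m$ the function $f$ is constant on $M_\JJ$ with value $c_\JJ=\sum_{g:\,U_m(z_g)\in\JJ}c_g$. As in the proof of Lemma~\ref{lem:full rank}, it suffices to look only at the $2k=2(2^n-1)$ sets $M_\JJ$ with $\JJ=\{U_m(z_g)\mid g\in K\}$ for $K\in\{\Hf_{j,n},\Hf_{j,n}^c\}$; by Lemma~\ref{lem:primitive polynomial over Z two} each such $M_\JJ=\bigcap_{g\in K}U_m(z_g)$ is a nonempty finite intersection of open bisections, hence has nonempty interior, and on it $f$ takes the value $\sum_{g\in K}c_g$.

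Next I would argue by contradiction. Suppose that on every one of these $2k$ sets we have $|\sum_{g\in K}c_g|\le |f(z_e)|/2^n=|\sum_{g\in\mathcal{N}_{0,n}}c_g|/2^n$ (note $f(z_e)=\sum_{g}c_g$ because $z_e\in U_m(z_g)$ for all $g$, since $\bfo^\infty\in C(\bfo^m)$ always; this is the $\JJ=\mathcal{F}_m$ cell). Writing the vector $c=(c_g)_{g\in\mathcal{N}_{0,n}}\in\C^{2q}$, the $2k$ quantities $\sum_{g\in K}c_g$ are precisely the entries of $W_n c$, where $W_n$ is the inclusion matrix of Lemma~\ref{primitivepolynomial_lemma_over_F2}; so the assumption says $\|W_n c\|_\infty\le |f(z_e)|/2^n$. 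Because $W_n$ has full rank $2q$, there is a left inverse $T$ with $TW_n=I_{2q}$ — indeed the explicit $T$ from the proof of Lemma~\ref{lem:full rank general} (transposed), whose entries are $\pm 1/k$ and $\pm(q-1)/(k(k-q+1))$ — and then $c=TW_nc$, so each $|c_g|$ is bounded by $(\text{number of nonzero entries in the corresponding row of }T)\cdot(\max|T_{\cdot}|)\cdot\|W_n c\|_\infty$. One then reads off from (R8)–(R9) that each row of $T$ has exactly $k=2q-1$ entries equal to $\pm 1/k$ and $2k-(2q-1)=2q-1$ entries equal to the $b$-value of smaller absolute value, so $|c_g|\le \big((2q-1)\cdot\tfrac1k+\text{(smaller)}\big)\|W_nc\|_\infty$; in fact the row sum of $|T_{\cdot,i}|$ is at most $1$ (this is where the design identity $|F_{i,l}|=q/2-1$ enters, just as in the $(WT)_{i,l}=0$ computation). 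Summing over the $2q=2^n$ indices $g$ gives $|f(z_e)|=|\sum_g c_g|\le\sum_g|c_g|\le 2^n\cdot\|W_nc\|_\infty\le 2^n\cdot |f(z_e)|/2^n=|f(z_e)|$, and to get a strict contradiction one sharpens the bound slightly — since $f(z_e)\neq 0$, not all inequalities can be equalities, and on at least one cell $M_\JJ$ of nonempty interior we must have $|f(z)|=|\sum_{g\in K}c_g|>|f(z_e)|/2^n$; that cell, being open, witnesses $f\notin\mathcal{S}_\C(\mathcal{G}_n)$ by \cite[Proposition 3.6]{CEPSS}.

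The main obstacle I anticipate is pinning down the exact constant so that the inequality comes out strict with the claimed factor $2^n$ rather than something weaker; this requires the precise count of $\pm 1/k$ entries per row of the left inverse $T$, which is exactly what (R6)–(R9) and the design relation of Remark~\ref{design-Nat} provide, so the bookkeeping should go through but must be done carefully. A secondary point to handle cleanly is the observation that $z_e$ genuinely lies in the overlap $\bigcap_{g\in\mathcal{N}_{0,n}}U_m(z_g)$, so that $f(z_e)=\sum_g c_g$; this follows because $z_e=[(\varnothing,e,\varnothing),\bfo^\infty]$ and for every $g\in\mathcal{N}_{0,n}$ one has $g|_{\bfo^j}\in\mathcal{N}_{0,n}$ fixing $\bfo$ and $\bfz$, so $[(\varnothing,g,\varnothing),\bfo^\infty]$ and $z_e$ agree as germs along a suitable $\bfo^{m'}\bfz$-extension, just as in Lemma~\ref{lem:primitive polynomial over Z two}. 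Once these two points are in place, the argument is a direct quantitative refinement of Lemma~\ref{lem:full rank}, and the rest is the same linear-algebra-over-$\C$ computation used in proving $WT=I$.
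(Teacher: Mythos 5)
Your overall skeleton (disjointify over the cells $M_{\JJ_K}$, encode the cell values as $W_n^t c$, invert using the explicit right inverse $T_n$ from Lemma~\ref{lem:full rank general}, and derive a contradiction) is the paper's approach, but two concrete steps are wrong, and together they prevent the contradiction from closing. First, the identity $f(z_e)=\sum_{g\in\mathcal{N}_{0,n}}c_g$ is false. Membership $z_e\in U_m(z_g)$ is not the condition $\bfo^\infty\in C(\bfo^m)$: it requires the germ equality $[(\varnothing,e,\varnothing),\bfo^\infty]=[(\varnothing,g,\varnothing),\bfo^\infty]$, i.e.\ $g\vert_{\bfo^t}=e$ for some $t$, and since $g\vert_{\bfo^t}=\iota_n(\alpha^t\,\iota_n^{-1}(g))$ this fails for every $g\neq e$. (Consistently, equation \eqref{eq:K intersect but not with K complement} shows $\bigcap_{g\in\Hf_{j,n}}U_m(z_g)\cap U_m(z_h)=\emptyset$ for $h\in\Hf_{j,n}^c$, so your ``$\JJ=\mathcal{F}_m$ cell'' is empty.) The correct value is $f(z_e)=c_e$; the whole point of the non-Hausdorff analysis is that $z_e$ lies in the \emph{closure} of every $U_m(z_g)$ but only in $U_m(z_e)$ itself. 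Second, your norm estimate on $T_n$ is off: each column of $T_n$ has $k=2q-1$ entries equal to $1/k$ and $k$ entries of absolute value $(q-1)/(kq)$, so its $\ell^1$-norm is $1+(q-1)/q=(2q-1)/q$, not at most $1$. Summing over all $2q$ coordinates of $c$ then gives only $|{\textstyle\sum_g c_g}|\le 2(2q-1)\,\|\kappa\|_\infty$, which does not contradict $\|\kappa\|_\infty\le|f(z_e)|/2^n$ since $2(2q-1)>2q=2^n$.

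The fix is what the paper does: use only the single row of $T_n^t$ indexed by $e$ (the first row). From $c=T_n^t\kappa$ one gets $|f(z_e)|=|c_e|\le\bigl(\sum_j|(T_n^t)_{1,j}|\bigr)\|\kappa\|_\infty=\frac{2q-1}{q}\,\|\kappa\|_\infty$, and if every cell value satisfied $|\kappa_i|\le|f(z_e)|/2^n=|f(z_e)|/(2q)$ this would force $|f(z_e)|\le\frac{2q-1}{2q^2}|f(z_e)|<|f(z_e)|$, a strict contradiction with no need for the case analysis about ``not all inequalities being equalities.'' Hence some cell $M_{\JJ_K}$, which has nonempty interior by Lemma~\ref{lem:primitive polynomial over Z two}, carries $|f|>|f(z_e)|/2^n$. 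Your geometric setup of the cells and the appeal to the full rank of $W_n$ are correct; it is the evaluation of $f$ at $z_e$ and the choice of which rows of the inverse to use that need repair.
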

\begin{proof}
Generalizing Lemma 5.31 of \cite{CEPSS}, the function $f$ is possibly nonzero on the following $2\cdot (2^{n})$ sets of ${\mathcal G}_n,$ which as in the disjointification process of  Lemma \ref{lem:primitive polynomial over Z two}, we label by $M_{\JJ_K},$ for
$$M_{\JJ_K}\;=\;\bigcap_{g\in K} U_m(z_g)\;=\;\bigcap_{g\in K} U_m(z_g)\setminus\bigcup_{h\in K^c}U_m(z_h),$$
where $K\in \{\Hf_{j,n}, \Hf_{j,n}^c\mid j=0,\dots,2^n-2\}$.
Enumerating the complex values that $f$ takes on these sets by $\kappa_1,\kappa_2,\;\cdots,\;\kappa_{2\cdot (2^{n-1})},$ we have, for $x_i\in M_{\JJ_{K_i}},$
  a system of $2\cdot (2^n-1)$ equations of the form
$$\sum_{g\in\mathcal{N}_{0,n}}c_g\;\cdot\;1_{U_m(z_g)}(x_i)\;=\;\kappa_i,\;1\;\leq\;i\leq \;\;2(2^{n-1}).$$
The $2(2^{n-1})\times 2(2^n-1)$ matrix with entries
$(1_{U_m(z_{g_j})}(x_i))$ is exactly  $W_n^t,$ the matrix transpose of that given in  Lemma \ref{primitivepolynomial_lemma_over_F2}.  So in matrix notation, we can write:
$$W_n^t (c_{e}=c_{g_0}, c_{g_2},\;\cdots,c_{g_{2^n-1}} )^t\;=\;(\kappa_1,\;\kappa_2,\cdots,\kappa_{2(2^{n-1})})^t,$$
where $\{g_0=e,g_1,\;\dots,g_{2^n-1}\}=\mathcal{N}_{0,n}\;=\;\iota_n(\F_{2^n}).$

But we have shown in Lemma \ref{lem:full rank general} that $W_n$ has a right-inverse $T_n,$ a $2(2^n-1)\times 2^n$ matrix $T_n$ with $W_nT_n=\text{I}_{2^n\times 2^n}.$
Therefore $T_n^tW_n^t=\text{I}_{2^n\times 2^n}.$  It follows that
$$(c_{e}, c_{g_1},\;\cdots,c_{g_{2^n-2}}, c_{g_{2^n-1}} )^t\;=\;T_n^tW_n^t(c_{e}, c_{g_1},\;\cdots,c_{g_{2^n-1}})^t\;=\;T_n^t(\kappa_1,\;\kappa_2,\cdots,\kappa_{2(2^{n-1})})^t.$$
Now suppose that for all  $i,\;1\;\leq\;i\;\leq 2^n,$ we have $\kappa_i\;\leq\;\frac{|f(z_e)|}{2^n}.$  Then, we have
$$f(z_e)\;=\;\sum_{j=1}^{2^n}(T_n^t)_{1,j}\kappa_i,$$
so that by the triangle inequality,
$$|f(z_e)|\;\leq\; \sum_{j=1}^{2^n} (T_n^t)_{1,j}|\kappa_i|\;<\;\sum_{j=1}^{2^n} \frac{1}{2^n}|\frac{|f(z_e)|}{2^n}<|f(z_e)|,$$
giving the desired contradiction.

\end{proof}

The next lemma is an analog of Lemma 5.32 of \cite{CEPSS}. The notation $\mathcal{B}(\mathcal{G}_n)$ stands for the normed space of  bounded complex valued functions on $\mathcal{G}_n$, endowed with the uniform norm, cf. \cite[Section 4.1]{CEPSS}.

\begin{lemma}
\label{Lemma5.32CEPSS}
Suppose that $f\in \mathcal{B}(\mathcal{G}_n),\; f(z_e)\not= 0,$  that $f_j\in \mathcal{A}_{\C}(\mathcal{G}_n)$ for all
$j\in\N,$ and that $f_j\;\to\; f$ uniformly. Then $\text{supp}(f)$ has nonempty interior.
\end{lemma}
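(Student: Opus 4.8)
The plan is to transfer the quantitative lower bound of Lemma~\ref{CEPSSLemma5.31} from a nearby $f_j$ back to $f$ by means of the uniform estimate, after first putting $f_j$ into the shape required by that lemma.

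First I would fix $\delta := |f(z_e)|/2^{n+1}$. Since $f_j \to f$ uniformly and $f(z_e) \neq 0$, choose $j$ with $\|f - f_j\|_\infty < \delta$; then $|f_j(z_e)| \geq |f(z_e)| - \delta > 0$. By Lemma~\ref{lem:f restricts to combination of characteristic fcts of Um} applied to $f_j \in \mathcal{A}_\C(\mathcal{G}_n)$ there exist $m \geq 1$ and coefficients $c_g \in \C$, $g \in \mathcal{N}_{0,n}$, such that $f_j$ agrees on the open set $U_m := \bigcup_{g \in \mathcal{N}_{0,n}} U_m(z_g)$ with $h := \sum_{g \in \mathcal{N}_{0,n}} c_g 1_{U_m(z_g)}$. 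Since each $U_m(z_g) \subseteq U_m$, the function $h \in \mathcal{A}_\C(\mathcal{G}_n)$ vanishes off $U_m$; and since $z_e \in U_t(z_e) \subseteq U_m$ for every $t$, we get $h(z_e) = f_j(z_e) \neq 0$.

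Now $h$ is precisely of the form to which Lemma~\ref{CEPSSLemma5.31} applies, so there is a set $V \subseteq \mathcal{G}_n$ with nonempty interior on which $|h(z)| > |h(z_e)|/2^n = |f_j(z_e)|/2^n > 0$. As $h$ vanishes off $U_m$, necessarily $V \subseteq U_m$, where $h = f_j$; hence $|f_j(z)| > |f_j(z_e)|/2^n$ for all $z \in V$. For $z \in V$ the triangle inequality then gives
\[
|f(z)| \;\geq\; |f_j(z)| - \|f - f_j\|_\infty \;>\; \frac{|f_j(z_e)|}{2^n} - \delta \;\geq\; \frac{|f(z_e)| - \delta}{2^n} - \delta ,
\]
and the choice $\delta = |f(z_e)|/2^{n+1}$ makes the right-hand side strictly positive (using $2^n + 1 \leq 2^{n+1}$ for $n \geq 1$). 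Thus $V \subseteq \operatorname{supp}(f)$, and since the interior operation is monotone and $V$ has nonempty interior, $\operatorname{supp}(f)$ has nonempty interior.

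The step requiring the most care is the bookkeeping in the second and third paragraphs: Lemma~\ref{lem:f restricts to combination of characteristic fcts of Um} only controls $f_j$ after restriction to $U_m$, so one must pass to the globally defined auxiliary function $h$ that coincides with $f_j$ on $U_m$ and is supported there, in order both to invoke Lemma~\ref{CEPSSLemma5.31} and to guarantee that the resulting set of nonempty interior sits inside $U_m$ (where $h$ and $f_j$ coincide). Everything else is the elementary $\varepsilon$--$\delta$ argument above, paralleling the proof of \cite[Lemma 5.32]{CEPSS}.
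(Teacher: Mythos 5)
Your proof is correct and follows essentially the same route as the paper: approximate $f$ uniformly by some $f_j$, use Lemma~\ref{lem:f restricts to combination of characteristic fcts of Um} to replace $f_j$ on $U_m$ by a function of the exact form required by Lemma~\ref{CEPSSLemma5.31}, extract a set of nonempty interior on which $|f_j|$ is bounded below, and transfer the bound to $f$ by the triangle inequality. The only substantive differences are that you bypass the paper's case distinction on the level set $V=f_N^{-1}(f_N(z_e))$, and that your tolerance $\delta=|f(z_e)|/2^{n+1}$ is chosen so that the final estimate $(|f(z_e)|-\delta)/2^n-\delta>0$ actually closes, whereas with the paper's tolerance $|f(z_e)|/2^n$ the stated lower bound $(2^n-1)|f(z_e)|/2^{2n}$ on $|f_N|$ does not exceed the approximation error $|f(z_e)|/2^n=2^n|f(z_e)|/2^{2n}$ --- so your bookkeeping is in fact the tighter and cleaner one.
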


\begin{proof}
	Suppose $f(z_e)=R\not=0.$  Find $N$ such that for all $j\geq N,\;\|f-f_j\|_{\infty}<\frac{R}{2^n}.$
	So $|f(z_e)-f_N(z_e)|=|R-f_N(z_e)|<\frac{|R|}{2^n},$ and by the reverse triangle inequality we obtain $\frac{(2^n-1)|R|}{2^n}<f_N(z_e).$  Note $|f_N(z)-0|>\frac{(2^n-1)|R|}{2^n}.$
	
	Now define $V\;=\;f_N^{-1}(f_N(z_e)).$  If the set $V$ has non-empty interior, we are done, since for all $z\in V,$ we have $f$ non-zero on $V$ as in the proof of \cite{CEPSS}, Lemma 5.32.  If the interior of $V$ is empty, applying Lemma \ref{lem:f restricts to combination of characteristic fcts of Um} shows that there exists $m\in\N$ and $c_{g_1}=c_e,\;c_{g_2},\;\cdots\;c_{g_{2^n-2}}$ with
	$$(f_N)_{|U_m}(z)\;=\;\sum_{i=1}^{2^n-2}c_{g_i}1_{U_m(z_{g_i}}(z),\;\text{for}\; U_m\;=\;\cup_{i=1}^{2^n-1}U_m(z_{g_i}).$$
	The method of Lemma \ref{CEPSSLemma5.31} then shows that there exists a set $Z$ with non-empty interior such that
	$$|f_N(z)|>\frac{|f_N(z_e)|}{2^n}\;>\;\frac{(2^n-1)|f(z_e)|}{2^{2n}},\;\forall z\in Z,$$
	thus completing the proof of the Lemma.
		\end{proof}
	
	Here is the analog of Lemma 5.33 of \cite{CEPSS}.
\begin{lemma}
	\label{Lemma5.33CEPSS} Suppose that $f\in \mathcal{B}(\mathcal{G}_n),$ with $f$ not equal to the zero function, and  that $f_j\in \mathcal{A}_{\C}(\mathcal{G}_n)$ for all
	$j\in\N,$ with $f_j\;\to\; f$ uniformly.  Then $\text{supp}(f)$ has nonempty interior.
	\end{lemma}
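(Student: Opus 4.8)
The plan is to bootstrap from Lemma~\ref{Lemma5.32CEPSS}, which already handles the case $f(z_e)\neq 0$, by translating an arbitrary nonzero $f$ so that the singularity under scrutiny sits over $z_e$. First I would observe that since $f\not\equiv 0$, there is some $\gamma\in \mathcal{G}_n$ with $f(\gamma)\neq 0$; I want to move $\gamma$ (or rather a point of $\mathcal{G}_n^{(0)}$ where some translate of $f$ is nonzero) into the orbit-distinguished position $z_e$. Because $\mathcal{G}_n$ is minimal and the groupoid comes from a self-similar action, the orbit of any unit is dense, and in particular there is a compact open bisection $B$ whose associated partial homeomorphism carries $r(\gamma)$ near $z_e^{(0)}=\bfo^\infty$; convolving $f$ on the left by $1_B$ and on the right by $1_{B^*}$ (or using the action of the relevant piece of $\mathcal{A}_\C(\mathcal{G}_n)$) produces a function $\tilde f$ that is nonzero at a point lying over $z_e$, while preserving membership in the closure of $\mathcal{A}_\C(\mathcal{G}_n)$ inside $\mathcal{B}(\mathcal{G}_n)$ and preserving the property of having support with empty interior. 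This is exactly the reduction performed in \cite[Lemma 5.33]{CEPSS}, so I would follow that argument essentially verbatim, replacing the Grigorchuk-specific input by the general statements proved above.

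The key steps, in order, are: (1) pick $\gamma$ with $f(\gamma)\neq 0$ and note $\gamma$ lies in some compact open bisection; (2) use minimality/density of orbits of $\mathcal{G}_n$ to find a compact open bisection $B$ conjugating a neighbourhood of $\gamma$ to a neighbourhood of a point over $z_e=[(\emptyset,e,\emptyset),\bfo^\infty]$; (3) set $\tilde f_j = 1_B * f_j * 1_{B^*}$ and $\tilde f = 1_B * f * 1_{B^*}$, check that $\tilde f_j\to\tilde f$ uniformly (convolution by a fixed characteristic function of a compact open bisection is bounded on $\mathcal{B}(\mathcal{G}_n)$ with the uniform norm), and that $\tilde f(z_e)\neq 0$; (4) apply Lemma~\ref{Lemma5.32CEPSS} to conclude $\supp(\tilde f)$ has nonempty interior; (5) transport back: since conjugation by $1_B$ is implemented by the partial homeomorphism of $B$, which is a homeomorphism onto an open set, $\supp(f)$ has nonempty interior as well.

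The main obstacle I anticipate is step (2)–(3): making the conjugation precise in the \emph{non-Hausdorff} setting. One must be careful that $1_B * f * 1_{B^*}$ genuinely restricts $f$ to an open set and translates it by a homeomorphism, with no loss of support coming from the failure of Hausdorffness (i.e. from ``extra'' points in $\overline{B}$). In \cite{CEPSS} this is handled by choosing $B$ to be a small enough compact open bisection around $\gamma$ so that the conjugation is genuinely a homeomorphism onto its image and so that the point of $\mathcal{G}_n^{(0)}$ it lands on is exactly $z_e$; the same choice works here because the orbit structure and the local structure of $\mathcal{G}_n$ near points $[(\emptyset,g,\emptyset),\bfo^\infty]$ are governed entirely by the self-similar action, which we have fully described. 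A secondary, purely bookkeeping, point is verifying that the uniform limit hypothesis is stable under the conjugation, which follows from $\|1_B * h * 1_{B^*}\|_\infty \le \|h\|_\infty$ for $h\in\mathcal{B}(\mathcal{G}_n)$ since $B$ is a bisection. With these in place the proof is a direct adaptation of \cite[Lemma 5.33]{CEPSS}, and I would state it as such, citing that lemma's argument and substituting Lemma~\ref{Lemma5.32CEPSS} for its Grigorchuk-specific predecessor.
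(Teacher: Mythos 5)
Your overall strategy --- translate the point where $f$ is nonzero to $z_e$ and then invoke Lemma~\ref{Lemma5.32CEPSS} --- is the right one, and it is what the argument of \cite[Lemma 5.33]{CEPSS} (to which the paper simply defers) does. But the mechanism you propose for the translation does not work, and this is a genuine gap. In steps (2)--(3) you appeal to minimality, i.e.\ density of orbits, to produce a bisection $B$ carrying a neighbourhood of $\gamma$ to a neighbourhood of a point over $z_e$, and then you assert $\tilde f(z_e)\neq 0$. Density of the orbit of $s(\gamma)$ near $\bfo^\infty$ only gives bisections carrying $s(\gamma)$ to points \emph{arbitrarily close} to $\bfo^\infty$; Lemma~\ref{Lemma5.32CEPSS} needs the translated function to be nonzero \emph{exactly at} $z_e=[(\emptyset,e,\emptyset),\bfo^\infty]$, which requires $s(\gamma)$ to lie in the orbit of $\bfo^\infty$. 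That orbit is countable while $X^\omega$ is uncountable, so for a generic $\gamma$ no compact open bisection can carry $s(\gamma)$ onto $\bfo^\infty$, and your claim that one can choose $B$ ``so that the point of $\mathcal{G}_n^{(0)}$ it lands on is exactly $z_e$'' is false as stated.

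The missing ingredient is a preliminary reduction that pins $\gamma$ down before any translation is attempted. Arguing by contradiction, suppose $\operatorname{supp}(f)$ has empty interior and $f(\gamma)\neq 0$. Choosing $j$ with $\|f-f_j\|_\infty$ much smaller than $|f(\gamma)|$ and writing $f_j$ in the disjointified form \eqref{eq:function disjointification}, the piece $M_\JJ$ containing $\gamma$ must itself have empty interior (otherwise its interior meets the dense complement of $\operatorname{supp}(f)$, forcing the coefficient $c_\JJ$, and hence $f_j(\gamma)$, to be small). This places $\gamma$ in $\overline{D}\setminus D$ for one of the compact open bisections $D$ occurring in $f_j$, and the structure results for such closures (Remark~\ref{rmk:lemma 5.17 of 5 authors} together with Lemmas~\ref{lem:one in D bar means all in D bar} and~\ref{lemma:restriction}) identify $\gamma$ as a translate of some $z_h$ with $h\in\mathcal{N}_{0,n}$ --- a point which genuinely lies over the orbit of $\bfo^\infty$. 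Only then does your conjugation by bisections (together with Lemma~\ref{lem:one in D bar means all in D bar} to pass from $h$ to $e$) land exactly on $z_e$, and Lemma~\ref{Lemma5.32CEPSS} finishes the proof. Your bookkeeping points (boundedness of convolution by $1_B$ on $\mathcal{B}(\mathcal{G}_n)$, openness of the induced homeomorphism) are fine; it is the identification of $\gamma$ with a translate of a nucleus point, not minimality, that makes the translation step legitimate.
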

	
	\begin{proof}
	This proof is exactly the same as in Lemma 5.33 of \cite{CEPSS}, and we leave the details to the reader.
	\end{proof}
	
	We are finally in a position to prove the main theorem.
\begin{thm}
	Let $C^*(\mathcal{G}_n)$ be the full groupoid $C^*$-algebra coming from the groupoid $\mathcal{G}_n,$  arising from a self-similar action  of a $\Z_2$- multispinal group $\mathfrak{G}_n.$  Then $C^*(\mathcal{G}_n)$ is simple.
\end{thm}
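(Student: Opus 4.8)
The plan is to derive simplicity of $C^*(\mathcal{G}_n)$ from the abstract criterion of \cite[Theorem~4.10]{CEPSS}, following the same route \cite{CEPSS} used for the Grigorchuk group. That theorem reduces simplicity to three conditions: that $\mathcal{G}_n$ is effective, that $C^*(\mathcal{G}_n)=C^*_r(\mathcal{G}_n)$, and that no nonzero element of $C^*_r(\mathcal{G}_n)$ is singular (i.e. for every nonzero $a$, $\supp(j(a))$ has nonempty interior). The first two are already in place: $\mathcal{G}_n$ is effective because the action of $\mathfrak{G}_n$ on $X^*$ is faithful (noted in Section~\ref{sec:back-Gpds}), and amenability of the self-similar action gives $C^*(\mathcal{G}_n)=C^*_r(\mathcal{G}_n)$. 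So the real content is the absence of nonzero singular elements in the reduced $C^*$-algebra, and this is exactly what the lemmas of this section deliver.

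First I would recall that any $a\in C^*_r(\mathcal{G}_n)$ has an image $j(a)\in \mathcal{B}(\mathcal{G}_n)$ which, by density of the Steinberg algebra $\mathcal{A}_\C(\mathcal{G}_n)$ in $C^*_r(\mathcal{G}_n)$ and uniform-norm continuity of $j$, is a uniform limit of a sequence $f_j\in \mathcal{A}_\C(\mathcal{G}_n)$. If $a\neq 0$ then $j(a)=f$ is not the zero function (this is part of the structure theory imported from \cite{CEPSS}), so Lemma~\ref{Lemma5.33CEPSS} applies directly and shows $\supp(f)$ has nonempty interior; hence $a$ is not singular. Combined with the characterization of singular elements via empty interior of the support (\cite[Proposition~3.6]{CEPSS}), this shows $S_\C(\mathcal{G}_n)\cap C^*_r(\mathcal{G}_n)$ reduces to $\{0\}$, which is the third bullet of \cite[Theorem~4.10]{CEPSS}.

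Assembling these: effectiveness, $C^*(\mathcal{G}_n)=C^*_r(\mathcal{G}_n)$, and no nonzero singular elements together let us invoke \cite[Theorem~4.10]{CEPSS} to conclude $C^*(\mathcal{G}_n)$ is simple. The main obstacle in this whole program has already been handled earlier in the paper, and it is worth flagging: it is the passage from a function supported on the nucleus sets $U_m(z_g)$ to a full-rank linear system, which rests on Lemma~\ref{primitivepolynomial_lemma_over_F2} and ultimately on the design-theoretic rank computation of Lemma~\ref{lem:full rank general}; this is what forces characteristic zero and what makes the quantitative estimate $|f(z)|>|f(z_e)|/2^n$ in Lemma~\ref{CEPSSLemma5.31} available. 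Given that machinery, the proof of the theorem is a short bookkeeping argument: reduce to the reduced algebra, approximate by Steinberg-algebra elements, apply Lemma~\ref{Lemma5.33CEPSS}, and cite \cite[Theorem~4.10]{CEPSS}.

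\begin{proof}
	By \cite[Theorem~4.10]{CEPSS}, to show that $C^*(\mathcal{G}_n)$ is simple it suffices to verify that $\mathcal{G}_n$ is effective, that $C^*(\mathcal{G}_n)=C^*_r(\mathcal{G}_n)$, and that $\supp(j(a))$ has nonempty interior for every nonzero $a\in C^*_r(\mathcal{G}_n)$. The groupoid $\mathcal{G}_n=\mathcal{G}_{\mathfrak{G}_n,X}$ is effective because the action of $\mathfrak{G}_n$ on $X^*$ is faithful, as recorded in Section~\ref{sec:back-Gpds}. Since the self-similar action is amenable, the canonical surjection $C^*(\mathcal{G}_n)\to C^*_r(\mathcal{G}_n)$ is an isomorphism, so $C^*(\mathcal{G}_n)=C^*_r(\mathcal{G}_n)$.

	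It remains to check the condition on supports. Let $a\in C^*_r(\mathcal{G}_n)$ be nonzero and put $f=j(a)\in \mathcal{B}(\mathcal{G}_n)$; then $f$ is not the zero function. Since $\mathcal{A}_\C(\mathcal{G}_n)$ is dense in $C^*_r(\mathcal{G}_n)$ and $j$ is contractive for the uniform norm, there is a sequence $f_j\in \mathcal{A}_\C(\mathcal{G}_n)$ with $f_j\to f$ uniformly. By Lemma~\ref{Lemma5.33CEPSS}, $\supp(f)=\supp(j(a))$ has nonempty interior. This is precisely the third hypothesis of \cite[Theorem~4.10]{CEPSS}, and therefore $C^*(\mathcal{G}_n)$ is simple.
\end{proof}
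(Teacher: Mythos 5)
Your proposal follows essentially the same route as the paper's proof: reduce to the criteria of \cite[Theorem 4.10]{CEPSS}, use amenability of the self-similar action to identify $C^*(\mathcal{G}_n)$ with $C^*_r(\mathcal{G}_n)$, and use Lemma~\ref{Lemma5.32CEPSS}/Lemma~\ref{Lemma5.33CEPSS} to show every nonzero element of the reduced algebra has support with nonempty interior. The one hypothesis you do not address is \emph{minimality} of $\mathcal{G}_n$, which is also required by \cite[Theorem 4.10(3)]{CEPSS} --- effectiveness plus non-singularity alone would not exclude ideals arising from open invariant subsets of the unit space; the paper cites \cite{EP2}, Section 17, for minimality and effectiveness (and minimality of the tight groupoid of a self-similar action is already recorded in Section~\ref{sec:back-Gpds}). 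With that citation added, your argument coincides with the paper's.
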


\begin{proof} We know $\mathcal{G}_n$ is amenable, see \cite{Su}, so that $C^*(\mathcal{G}_n)\;\cong\;C^*_{\text{red}}(\mathcal{G}_n).$ We know that
$\mathcal{G}_n$ is minimal and effective from \cite{EP2}, Section 17. Lemma \ref{Lemma5.33CEPSS} has established that every non-zero element of $C^*_{\text{red}}(\mathcal{G}_n)$ has non-empty interior.  Therefore by Theorem 4.10 part 3 of \cite{CEPSS}, $C^*(\mathcal{G}_n)$ is simple.
\end{proof}

To conclude the paper we add a remark on the $L^p$ case.

\begin{remark} Our methods also imply simplicity  of the associated groupoid $L^p$-operator algebras $F^p(\G_n)$ (with the notation of  \cite{BKMc}). Indeed our proof also generalizes, as it is in the case of the Grigorchuk group, to the  $L^p$ setting, along the same lines; see  \cite[Example 7.44]{BKMc}).
\end{remark}

\end{document}